\documentclass[10pt,a4paper]{amsart}

\usepackage[all]{xy}
\usepackage[normalem]{ulem}
\usepackage{amsmath}
\usepackage{amsfonts}
\usepackage{amssymb}
\usepackage{amsthm}
\usepackage{graphicx}
\usepackage{microtype}
\usepackage[usenames,dvipsnames]{xcolor}
\usepackage{pgf,tikz}
\usepackage{mathrsfs}
\usetikzlibrary{arrows}
\usepackage{caption}
\usepackage{soul}
\usepackage[pdftex,plainpages=false,bookmarks=true,breaklinks=true,linkcolor=brown,citecolor=brown,colorlinks]{hyperref}
\usepackage{bm}
\usepackage{cancel} %to strikeout text
\usepackage{combelow}%for special character in romanian alphabet
\usepackage{here}

\newcommand{\prarrow}[2]{\ar@<0.5ex>[r]^-{#1} \ar@<-0.5ex>[r]_-{#2}}
\newcommand{\plarrow}[2]{\ar@<0.5ex>[l]^-{#1} \ar@<-0.5ex>[l]_-{#2}}
\newcommand{\pdarrow}[2]{\ar@<0.5ex>[d]^-{#1} \ar@<-0.5ex>[d]_-{#2}}
\newcommand{\puarrow}[2]{\ar@<0.5ex>[u]^-{#1} \ar@<-0.5ex>[u]_-{#2}}

\newcommand{\Rno}[1]{\textcolor{red}{no (#1)}}

\newtheorem{theorem}{Theorem}[section]
\newtheorem{corollary}[theorem]{Corollary}    
\newtheorem{lemma}[theorem]{Lemma}

\theoremstyle{definition}
\newtheorem{definition}[theorem]{Definition}   
    
\newtheorem{remark}[theorem]{Remark}
\newtheorem{problem}[theorem]{Problem}

\title{Whitney Properties and Whitney reversible properties of Cut and Non-Cut Points}

\author{Eiichi Matsuhashi}
\address{Department of Mathematics, Shimane University, Matsue, Shimane, 690-8504, Japan.}
\email{matsuhashi@riko.shimane-u.ac.jp}

\begin{document}

\begin{abstract}
In this paper, we study several properties concerning cut points and non-cut points. First, we show that the property of being a continuum consisting entirely of shore points is preserved under refinable maps. Next, we show that having a block point, having a non-shore point, and having a strong center point are not Whitney-reversible properties. Consequently, \cite[Question 2.9]{nonweak} has a negative answer. Furthermore, we show that none of the classes of continua witnessing the failure of the sequential strong Whitney-reversible property for these three properties admits a common model. Finally, in connection with the fact that colocal connectedness is not a Whitney-reversible property, we show that the class of nonaposyndetic continua having a cut point and such that every positive Whitney level is colocally connected has no common model.

\end{abstract}

\keywords{shore point, non-block point, non-strong center point,  colocally connected, Whitney property, Whitney-reversible property, common model, refinable map}
\subjclass[2020]{Primary  54F15 ; Secondary 54F16}      

\maketitle
\markboth{}{Eiichi Matsuhashi}

\section{Introduction}
 Throughout this paper, all spaces are assumed to be metrizable. A \emph{continuum} is a compact connected metric space. 

 %A topological property $P$ is called a \emph{Whitney property} if, whenever a continuum $X$ has property $P$, the level $\mu^{-1}(t)$ has property $P$ for every Whitney map $\mu$ for $C(X)$ and every $t\in[0,\mu(X))$. A topological property $P$ is called \emph{Whitney reversible} if, whenever $X$ is a continuum such that $\mu^{-1}(t)$ has property $P$ for every Whitney map $\mu$ for $C(X)$ and every $t\in(0,\mu(X))$, then $X$ itself has property $P$.

%A point $p$ of a continuum $X$ is called a \emph{cut point} of $X$ if $X\setminus \{p\}$ is disconnected, and a \emph{non-cut point} otherwise. A point $p$ of a continuum $X$ is called a \emph{shore point} of $X$ if, for every $\varepsilon>0$, there exists a subcontinuum $K$ of $X\setminus \{p\}$ such that $H_d(K,X)<\varepsilon$, where $H_d$ denotes the Hausdorff metric induced by a compatible metric $d$ on $X$. A point $p$ of a continuum $X$ is called a \emph{point of colocal connectedness} if, for every subcontinuum $K$ of $X$ containing $p$ in its interior, there exists a connected open subset $U$ of $X$ such that $p\in U\subset K$. A continuum all of whose points are points of colocal connectedness is called a \emph{colocally connected continuum}.

In continuum theory, examining how topological properties are preserved under mappings (such as refinable maps) and within hyperspaces (such as Whitney levels) provides two fundamental perspectives on structural stability. In this paper, we study the various types of non-cut points and cut points considered in \cite{noncut} and their behavior under refinable maps, and we also examine their connections with Whitney properties, Whitney-reversible properties, and the non-existence of common models.

The paper is organized as follows.

In Section 2, we present the terminology and definitions that will be used throughout the paper, together with some known results.

In Section 3, we consider topics concerning refinable maps. It was shown in \cite[Theorem 2.2]{refinablematsuhashi} that colocal connectedness is preserved under refinable maps. We establish the corresponding result for the property of being a continuum consisting entirely of shore points.

In Section 4, we consider topics concerning Whitney properties and Whitney-reversible properties. It was shown in \cite{refinablematsuhashi} that colocal connectedness is a Whitney property but is not a Whitney-reversible property. It was also shown in \cite[Corollary 2.4]{nonweak} that having a weak cut point is a sequential strong Whitney-reversible property, whereas \cite[Theorem 2.2]{matsuhashicut} showed that having a cut point is not a Whitney-reversible property. We demonstrate that having a block point, having a non-shore point, and having a strong center point are not Whitney-reversible properties. Consequently, \cite[Question 2.9]{nonweak} is answered in the negative. %The known results concerning these point properties and their relations to Whitney properties, Whitney-reversible properties, strong Whitney-reversible properties, and sequential strong Whitney-reversible properties are summarized in a table.

In Section 5, we study the non-existence of common models for classes of continua related to Whitney properties and Whitney-reversible properties. Related questions concerning common models have also been considered in \cite{none}.  We show that none of the classes of continua witnessing the failure of the sequential strong Whitney-reversible property for the properties of having  a block point, having a non-shore point, and having a strong center point admits a common model. Finally, motivated by the fact that colocal connectedness is not a Whitney-reversible property, we show that the class of nonaposyndetic continua having a cut point and such that every positive Whitney level is colocally connected has no common model.

\section{Preliminaries}

In this section, we introduce the terminology and definitions used throughout the paper and recall some known results.

An \emph{arc} is a space homeomorphic to the closed interval $[0,1]$. A continuum is called \emph{decomposable} if it is the union of two proper subcontinua, and \emph{indecomposable} otherwise. A continuum is called \emph{hereditarily indecomposable} if every subcontinuum of it is indecomposable. For a space $X$ and a subset $A\subset X$, we denote the interior of $A$ in $X$ by $\mathrm{Int}_X A$ and the closure of $A$ in $X$ by $\mathrm{Cl}_X A$.

If $(X,d)$ is a space and $A\subseteq X$, we write $\mathrm{diam}_d A$ for $\sup \{d(a,b)\mid a,b\in A\}$. Let $(X,d_X)$ and $(Y,d_Y)$ be continua and let $\varepsilon>0$. A surjective map $f:X\to Y$ is called an $\varepsilon$-\emph{map} if $\mathrm{diam}_{d_X}f^{-1}(y)<\varepsilon$ for every $y\in Y$. If $g,g_\varepsilon:X\to Y$ are surjective maps such that $g_\varepsilon$ is an $\varepsilon$-map and $d_Y(g(x),g_\varepsilon(x))<\varepsilon$ for every $x\in X$, then $g_\varepsilon$ is called an $\varepsilon$-\emph{refinement} of $g$. A surjective map $r:X\to Y$ is called a \emph{refinable map} if, for every $\varepsilon>0$, it admits an $\varepsilon$-refinement $r_\varepsilon:X\to Y$. The notion of a refinable map was introduced in \cite{refinable}. A typical example, as described in the introduction of \cite{refinable}, is the map from the $\sin\frac{1}{x}$-continuum onto an arc obtained by shrinking the limit bar to a point. In that paper, several theorems concerning topological properties preserved under refinable maps are proved.
%The same paper also shows that, if there is a refinable map $r$ from a continuum $X$ onto a continuum $Y$, then $X$ is decomposable if and only if $Y$ is decomposable (\cite[Corollary 1.3]{refinable}). Moreover, a map $r$ from an arc-like continuum onto an arc is refinable if and only if $r$ is monotone (\cite[Theorem 5]{refinable}).

\smallskip

Let $(X,d)$ be a continuum and let $x\in X$. The following six properties of $x$ have been well studied in \cite{noncut}.

\smallskip

$\mathbf{P_1}$: $x$ is a \emph{point of colocal connectedness} if for every neighborhood $U$ of $x$, there exists a neighborhood $V$ of $x$ with ${\rm Cl}_X V\subset U$ such that $X\setminus V$ is connected. In particular, if every point of $X$ has $\mathbf{P_1}$, then $X$ is called a \emph{colocally connected continuum}.

$\mathbf{P_2}$: $x$ is a \emph{non-weak cut point} if every pair of points in $X\setminus\{x\}$ is contained in a subcontinuum of $X\setminus\{x\}$.

$\mathbf{P_3}$: $x$ is a \emph{non-block point} if there exist subcontinua $A_1\subset A_2\subset\cdots\subset X$ such that $\bigcup_{n=1}^{\infty}A_n$ is dense in $X\setminus\{x\}$.

$\mathbf{P_4}$: $x$ is a \emph{shore point} if, for every $\varepsilon>0$, there exists $A\in C(X)$ such that $x\notin A$ and $H_d(A,X)<\varepsilon$, where $H_d$ denotes the Hausdorff metric on $C(X)$ induced by $d$.

$\mathbf{P_5}$: $x$ is a \emph{non-strong center point} if for every pair of nonempty open subsets $U,V$ of $X$, there exists a subcontinuum of $X\setminus\{x\}$ which intersects both $U$ and $V$.

$\mathbf{P_6}$: $x$ is a \emph{non-cut point} if $X\setminus\{x\}$ is connected.

\smallskip

These properties satisfy the following implications:
\[
\mathbf{P_1}\Longrightarrow\mathbf{P_2}\Longrightarrow\mathbf{P_3}\Longrightarrow\mathbf{P_4}\Longrightarrow\mathbf{P_5}\Longrightarrow\mathbf{P_6}.
\]
In general, none of the reverse implications is valid (see \cite{noncut}).

\smallskip

For each $i \in \{1, \ldots, 6\}$, we consider the following two properties of a continuum:
\begin{itemize}
\item $\mathcal{P}_i$: every point of the continuum has property $\mathbf{P}_i$, and
\item $\mathcal{Q}_i$: there exists a point of the continuum that does not have property $\mathbf{P}_i$.
\end{itemize}
Note that $\mathcal{Q}_i$ is precisely the negation of property $\mathcal{P}_i$. 

It follows directly from the definitions and the implications among $\mathbf{P}_i$ that these global properties satisfy the following relations:
\[
\mathcal{P}_1 \Longrightarrow \mathcal{P}_2 \Longrightarrow \mathcal{P}_3 \Longrightarrow \mathcal{P}_4 \Longrightarrow \mathcal{P}_5 \Longrightarrow \mathcal{P}_6
\]
and
\[
\mathcal{Q}_6 \Longrightarrow \mathcal{Q}_5 \Longrightarrow \mathcal{Q}_4 \Longrightarrow \mathcal{Q}_3 \Longrightarrow \mathcal{Q}_2 \Longrightarrow \mathcal{Q}_1.
\]

\medskip
For a continuum $X$, let $2^X$ denote the collection of all closed subsets of $X$, endowed with the topology induced by the Hausdorff metric. We use $C(X)$ for the subspace of $2^X$ consisting of all nonempty subcontinua of $X$. A \emph{Whitney map} is a continuous function $\mu:C(X)\to[0,\mu(X)]$ such that $\mu({x})=0$ for every $x\in X$ and $\mu(A)<\mu(B)$ whenever $A,B\in C(X)$ satisfy $A\subsetneq B$. It is known that, for every Whitney map $\mu:C(X)\to[0,\mu(X)]$ and every $t\in[0,\mu(X)]$, the level $\mu^{-1}(t)$ is a continuum (see \cite{illanes}).

\smallskip

Let $X$ be a continuum and let $\mu:C(X)\to[0,\mu(X)]$ be a Whitney map. Let $\mathcal{P}$ be a topological property of continua.

\begin{itemize}
\item $\mathcal{P}$ is called a \emph{Whitney property} (WP) if, whenever $X$ has $\mathcal{P}$, then $\mu^{-1}(t)$ has $\mathcal{P}$ for every Whitney map $\mu$ for $C(X)$ and every $t\in[0,\mu(X))$.

\item $\mathcal{P}$ is called a \emph{Whitney-reversible property} (WRP) if, whenever $\mu^{-1}(t)$ has $\mathcal{P}$ for every Whitney map $\mu$ for $C(X)$ and every $t\in(0,\mu(X))$, then $X$ has $\mathcal{P}$.

\item $\mathcal{P}$ is called a \emph{strong Whitney-reversible property} (SWRP) if, whenever there is a Whitney map $\mu$ for $C(X)$ such that $\mu^{-1}(t)$ has $\mathcal{P}$ for every $t\in(0,\mu(X))$, then $X$ has $\mathcal{P}$.

\item $\mathcal{P}$ is called a \emph{sequential strong Whitney-reversible property} (SSWRP) if, whenever there are a Whitney map $\mu$ for $C(X)$ and a sequence ${t_n}_{n=1}^{\infty}$ in $(0,\mu(X))$ with $t_n\to0$ such that $\mu^{-1}(t_n)$ has $\mathcal{P}$ for every $n$, then $X$ has $\mathcal{P}$.
\end{itemize}

For further information on these notions, see, for example, \cite{illanes}.

\smallskip

Note that if $\mathcal{P}_i$ is a Whitney property, then $\mathcal{Q}_i$ is a sequential strong Whitney-reversible property. Similarly, if $\mathcal{Q}_i$ is a Whitney property, then $\mathcal{P}_i$ is a sequential strong Whitney-reversible property.

\smallskip

A continuum $M$ is called a \emph{common model} for a class $\mathcal{C}$ of continua if, for each $X\in\mathcal{C}$, there exists a continuous surjection from $M$ onto $X$. By the Hahn--Mazurkiewicz theorem (see \cite[Section 8]{nadler1}), the interval $[0,1]$ is a common model for the class of all Peano continua. Similarly, the pseudo-arc is known to serve as a common model for the class of arc-like continua (see \cite{fear}, \cite{lelek}, and \cite{mioduszewski}). On the other hand, using ideas developed in \cite{waraszkiewicz1932}, Waraszkiewicz \cite{waraszkiewicz1934} constructed a family of compactifications of $(0,1]$, each having the circle as remainder, that admits no common model. For further results concerning non-existence of common models, see also \cite{ KrzempekPol2009, MackowiakTymchatyn1984, none, apos, ortega}.

\section{Refinable Maps}

In this section,  we prove the following theorem. The proof is based on \cite[Theorem 2.2]{refinablematsuhashi}.

 \begin{theorem}
Let $(X,d_X)$ and $(Y,d_Y)$ be continua and let $f: X \to Y$ be a refinable map. If $X$ has property $\mathcal{P}_4$, then so does $Y$. 
\label{refinable}
\end{theorem}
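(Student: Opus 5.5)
To show that $Y$ has $\mathcal{P}_4$, I fix $y\in Y$ and $\varepsilon>0$ and look for $B\in C(Y)$ with $y\notin B$ and $H_{d_Y}(B,Y)<\varepsilon$. The idea is to pull the problem back to $X$ through a fine refinement of $f$, use a shore-point continuum in $X$, and push it forward. Since $f$ is refinable, for a small $\delta>0$ to be chosen later I take a $\delta$-refinement $r=r_\delta:X\to Y$. This is a surjective $\delta$-map with $d_Y(f(x),r(x))<\delta$ for all $x$. I only need $r$, not $f$ itself; the point of refinability is that $Y$ is the image of $X$ under maps with arbitrarily small fibers.

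The key steps are as follows.

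\emph{Step 1.} I use that $r$ is a $\delta$-map onto $Y$: the fiber $F=r^{-1}(y)$ is a nonempty compact set of diameter less than $\delta$. Choose $x_0\in F$.

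\emph{Step 2.} The obstacle is that a shore-point continuum $A$ for $x_0$ avoids $x_0$ but may still meet $F$, and then $y\in r(A)$. To avoid this, I invoke the standard fact for $\delta$-maps between compacta. By uniform continuity of $r^{-1}$ in the sense of upper semicontinuity and compactness, there is $\eta>0$ such that $\mathrm{diam}_{d_Y}D<\eta$ implies $\mathrm{diam}_{d_X} r^{-1}(D)<\delta$. I will instead work directly as follows. Since $x_0$ is a shore point of $X$, I would like $A\in C(X)$ with $A\cap F=\emptyset$ and $A$ Hausdorff-close to $X$. Mere avoidance of $x_0$ is not enough. I therefore expect to use the known equivalent formulation of shore points (from \cite{noncut}): if $x_0$ is a shore point, then for every $\gamma>0$ there exist a neighborhood $W$ of $x_0$ and $A\in C(X)$ with $A\cap W=\emptyset$ and $H_{d_X}(A,X)<\gamma$. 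This follows directly from compactness: $A$ is closed and misses $x_0$, so it misses a ball around $x_0$. The difficulty is making that ball contain $F$. So I reverse the order of choices. Pick $A$ first and then the refinement. That is, given $\varepsilon$, choose $A\in C(X)$ with $x_0$... but $x_0$ depends on $r$. This is exactly the subtle point. I plan to handle it by choosing $x_0$ relative to $f$ and using compactness over the fiber $f^{-1}(y)$, which is where I expect the main obstacle.

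Concretely, I would run the argument by contradiction, as in \cite[Theorem 2.2]{refinablematsuhashi}. Suppose $y$ is not a shore point. Then there is $\varepsilon>0$ such that every $B\in C(Y)$ with $H(B,Y)<\varepsilon$ contains $y$. For $\delta_n\to0$, take refinements $r_n$ and points $x_n\in r_n^{-1}(y)$. Pass to a subsequence with $x_n\to x^*$; note $f(x^*)=y$. Using that $x^*$ is a shore point, choose $A\in C(X)$, $x^*\notin A$, with $H(A,X)<\gamma$, and let $\rho=d_X(x^*,A)>0$. For large $n$, $r_n^{-1}(y)$ has diameter less than $\delta_n$ and contains $x_n$ within $\rho/2$ of $x^*$, so $r_n^{-1}(y)\cap A=\emptyset$ and hence $y\notin r_n(A)$. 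Since $r_n$ is uniformly close to $f$ and $H(A,X)<\gamma$, we get $H(r_n(A),Y)\le H(f(A),Y)+2\delta_n$. Also $H(f(A),Y)$ is small once $\gamma$ is small, by uniform continuity of $f$. Hence $H(r_n(A),Y)<\varepsilon$, and $r_n(A)\in C(Y)$ misses $y$, which is a contradiction.

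The step to check most carefully is the ordering of quantifiers in the last paragraph. $\gamma$ depends only on $\varepsilon$ via the uniform continuity of $f$, while $x^*$ is fixed before $A$ is chosen, and $n$ is chosen last. With this ordering, the argument appears consistent.
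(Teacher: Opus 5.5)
Your final argument is correct and is essentially the paper's proof: you take refinements with mesh tending to $0$, let the fibers over $y$ accumulate at a point $x^*$, pick a shore continuum $A$ with $x^*\notin A$ Hausdorff-close to $X$, and for large $n$ push $A$ forward by the refinement to get a continuum that misses $y$ and is close to $Y$ by uniform continuity of $f$. The contradiction framing is unnecessary, since the same estimates produce the required continuum directly as in the paper, and the exploratory discussion in Step 2 can simply be dropped.
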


\begin{proof}
Let $y\in Y$ and let $\varepsilon>0$. We show that $y$ is a shore point of $Y$. Since $f$ is a refinable map, there exists a sequence $\{f_{\frac{1}{n}}\}_{n=1}^{\infty}$ of $\frac{1}{n}$-refinements of $f$. We may assume that $\lim f_{\frac{1}{n}}^{-1}(y)$ exists. Let $\lim f_{\frac{1}{n}}^{-1}(y)=\{x\}$. Note that there exists $\delta>0$ such that if $A,B\in C(X)$ satisfy $H_{d_X}(A,B)<\delta$, then $H_{d_Y}(f(A),f(B))<\varepsilon$. Since $x$ is a shore point of $X$, there exists a subcontinuum $D$ of $X\setminus \{x\}$ such that $H_{d_X}(D,X)<\delta$. Then, there exists $n_0\in\mathbb{N}$ such that $f_{\frac{1}{n_0}}^{-1}(y)\cap D=\emptyset$ and $d_Y(f(z),f_{\frac{1}{n_0}}(z))<\varepsilon$ for each $z\in X$. It follows that $H_{d_Y}(Y,f_{\frac{1}{n_0}}(D))<2\varepsilon$ and $f_{\frac{1}{n_0}}(D)\subset Y\setminus \{y\}$. Therefore, $y$ is a shore point of $Y$. This completes the proof.
\end{proof}

It was shown in \cite[Theorem 2.2]{refinablematsuhashi} that $\mathcal{P}_1$ is preserved under refinable maps. In Theorem~\ref{refinable}, we proved that $\mathcal{P}_4$ is also preserved under refinable maps. This leads to the following problem.

\begin{problem}
Let $i\in \{2,3,5,6\}$. Is $\mathcal{P}_i$ preserved under refinable maps?
\end{problem}

\section{Whitney properties and Whitney-reversible properties}

In this section, we consider topics concerning Whitney properties and Whitney-reversible properties.

%Our next goal is to give a negative answer to \cite[Question 2.9]{nonweak}. 
The following lemma follows immediately from \cite[Lemma 4.9]{none}.

\begin{lemma}{\rm (\cite[Lemma 4.9]{none}; cf. \cite[Exercise 44.14]{illanes})}
Let $X$ be an indecomposable continuum, and let $a,b,c,d\in X$ lie in distinct composants of $X$. Define $X'$ to be the quotient space obtained from $X$ by identifying $a$ and $b$, and $c$ and $d$, respectively. Then $X'$ is an indecomposable continuum, and for every Whitney map $\mu:C(X')\to[0,\mu(X')]$ and every $t\in(0,\mu(X'))$, the fiber $\mu^{-1}(t)$ is a decomposable continuum.\label{whitneylevel}
\end{lemma}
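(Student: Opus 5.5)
The statement has two parts: $X'$ is an indecomposable continuum, and every positive Whitney level $\mu^{-1}(t)$ with $t<\mu(X')$ is decomposable. I would prove them in that order.

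\textbf{$X'$ is an indecomposable continuum.} $X'$ is the image of $X$ under the quotient map $q$, which identifies two disjoint pairs of points. So $X'$ is a compact, connected, metrizable space, and $q$ is a finite-to-one closed map.

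Suppose $X'=A\cup B$ with $A,B$ proper subcontinua. The idea is to lift $A$ and $B$ back to $X$. Consider $q^{-1}(A)$; its components are subcontinua of $X$. Since $q^{-1}(A)\neq X$, each such component is proper, so it lies in a single composant of $X$.

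Now $q$ is a homeomorphism away from the four points. Hence $A$ is obtained from the components of $q^{-1}(A)$ by gluing along at most two identifications. Because $A$ is connected, $q^{-1}(A)$ has at most three components, joined through the pairs $\{a,b\}$ and $\{c,d\}$. Each component lies in one composant, and $a,b,c,d$ lie in four distinct composants. So $q^{-1}(A)$ is contained in the union of at most three composants of $X$, and each piece is a proper subcontinuum.

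The same holds for $q^{-1}(B)$. Then $X=q^{-1}(A)\cup q^{-1}(B)$ is a finite union of proper subcontinua of $X$. This is impossible: proper subcontinua of an indecomposable continuum are nowhere dense, and $X$ is not a finite union of nowhere dense closed sets by Baire's theorem.

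\textbf{Positive Whitney levels are decomposable.} The natural route is to invoke \cite[Lemma 4.9]{none}, or the argument of \cite[Exercise 44.14]{illanes}, which is what the phrase ``follows immediately'' suggests. If I had to reprove it, my plan would be as follows.

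Fix $t\in(0,\mu(X'))$. Use the arc $\alpha$ in $X'$ formed by the identified points: the ``loop'' passing through the composants of $a$ (equal to that of $b$ after gluing) and of $c$ (equal to that of $d$). The aim is to write $\mu^{-1}(t)$ as the union of two proper subcontinua, $\mathcal A$ and $\mathcal B$:
\begin{itemize}
\item $\mathcal A$ consists of the elements of $\mu^{-1}(t)$ containing the glued point $q(a)$, together with the order arcs connecting them;
\item $\mathcal B$ is its complement-closure, shown to be connected by following order arcs through the second glued point $q(c)$.
\end{itemize}

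The point of having two identifications is this. They create a ``circle of composants'' in $X'$, so that small subcontinua meeting the glued points can be moved continuously around. This yields two overlapping connected families covering the level.

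\textbf{Main obstacle.} The hard step is proving that $\mathcal B$ is connected and proper. That argument is genuinely the content of the cited lemma, so in the write-up I would rely on \cite[Lemma 4.9]{none} rather than reconstruct it.
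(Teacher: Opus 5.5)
For the substantive half, decomposability of the positive Whitney levels, you do exactly what the paper does. The paper offers no argument beyond saying the lemma follows immediately from \cite[Lemma 4.9]{none}.

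Where you differ is that you prove the indecomposability of $X'$ directly instead of reading it off the citation. That argument is correct and gives a self-contained proof of that half. To make the counting step airtight, phrase it with clopen partitions rather than components. If $q^{-1}(A)$ split into four nonempty clopen pieces, the gluing graph on them would have at most two edges and hence be disconnected. A component of that graph then gives a nonempty proper saturated clopen set, whose image is clopen in $A$, a contradiction. Baire's theorem is unnecessary, since a finite union of closed nowhere dense sets is nowhere dense in any space. The composant hypothesis is never used, so this half shows that identifying finitely many pairs of points always preserves indecomposability; the hypothesis on $a,b,c,d$ matters only for the Whitney levels.

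Do discard your heuristic sketch for the second half, because it points the wrong way. Gluing points creates no arc in $X'$. When $X$ is hereditarily indecomposable, as in the paper's application, $X'$ contains no arc at all. Indeed, for an arc $\alpha\subset X'$, each of the at most three components of $q^{-1}(\alpha)$ lies in a single composant, so $q$ is injective on it. Each component would then have to be a point or an arc, hence a point, so $q^{-1}(\alpha)$ would be finite. Nor is there any reason for a ``complement-closure'' to be connected.

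A better starting point is the decomposition the paper uses in the proof of the theorem following this lemma: $\mu^{-1}(t)=\mathcal H\cup\mathcal A\cup\mathcal C$. Here $\mathcal H=\{q(C):C\in\nu^{-1}(t)\}$ with $\nu=\mu\circ q$ is a homeomorphic copy of a Whitney level of $X$. The sets $\mathcal A$ and $\mathcal C$ are the members of $\mu^{-1}(t)$ containing $q(a)$ and $q(c)$, respectively.
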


The following theorem is the main theorem in this section.
 The proof  is based on the idea of the proof of \cite[Lemma 4.9]{none}.

\begin{theorem}
Let $X$ be a hereditarily indecomposable continuum, and let $a, b, c, d \in X$ lie in distinct composants of $X$. Define $X'$ to be the quotient space obtained from $X$ by identifying $a$ and $b$, and $c$ and $d$, respectively. Let $\mu \colon C(X') \to [0, \mu(X')]$ be a Whitney map. Then $X'$ has the following properties:
\begin{enumerate}
\item $\mu^{-1}(t)$ is a decomposable continuum for each $t \in (0, \mu(X'))$;
\item $X'$ is an indecomposable continuum;
\item $\mu^{-1}(t)$ has property $\mathcal{Q}_5$ (i.e., $\mu^{-1}(t)$ has a strong center point) for each $t \in (0, \mu(X'))$;
\item $X'$ does not have property $\mathcal{Q}_3$ (i.e., $X'$ has only non-block points).
\end{enumerate}
\end{theorem}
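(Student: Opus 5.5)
Items (1) and (2) follow at once from Lemma~\ref{whitneylevel}: a hereditarily indecomposable continuum is in particular indecomposable, so the quotient $X'$ is indecomposable and every positive Whitney level $\mu^{-1}(t)$, $t\in(0,\mu(X'))$, is decomposable. The work lies in (3) and (4). Write $\pi\colon X\to X'$ for the quotient map, $p=\pi(a)=\pi(b)$ and $q=\pi(c)=\pi(d)$.

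For (4), I will show that every $x\in X'$ is a non-block point. The key tool is that a composant of $X$ is a countable increasing union of proper subcontinua, and each composant is dense. Take $x\in X'$ and choose a point $z\in X$ with $\pi(z)\neq x$ lying in a composant $\kappa$ that contains none of $\pi^{-1}(x)$. Since $\pi^{-1}(x)$ has at most two points, this excludes at most two composants, and there are uncountably many. Write $\kappa=\bigcup_n B_n$ with $B_1\subset B_2\subset\cdots$ proper subcontinua of $X$. Each $B_n$ avoids $\pi^{-1}(x)$, because any point of $B_n$ lies in $\kappa$. Then $A_n=\pi(B_n)$ are increasing subcontinua of $X'\setminus\{x\}$, and $\bigcup_n A_n=\pi(\kappa)$ is dense in $X'$, hence dense in $X'\setminus\{x\}$. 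So $x$ satisfies $\mathbf{P_3}$.

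For (3), I will follow the idea of \cite[Lemma 4.9]{none}. Fix $t$ and set $\mathcal{W}=\mu^{-1}(t)$. I first describe the subcontinua of $X'$. A proper subcontinuum $K$ of $X'$ either avoids $\{p,q\}$, in which case it is $\pi(L)$ for a subcontinuum $L$ of $X$, or it is the image of one or two subcontinua of $X$ glued at $p$ and/or $q$. Because the four points lie in distinct composants, no proper subcontinuum of $X$ contains two of them. Hence the possible shapes are: $\pi(L)$ with $L$ meeting at most one of $a,b,c,d$; $\pi(L_a\cup L_b)$ with $a\in L_a$, $b\in L_b$; similarly for $c,d$; or chains of up to four pieces. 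In the hereditarily indecomposable space $X$, any two intersecting subcontinua are nested. Consequently, for $t$ fixed there is essentially one ``glued'' element at $p$ of each type, namely $K_p=\pi(L_a\cup L_b)$ with $L_a$ and $L_b$ chosen so that $\mu(K_p)=t$.

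Following the decomposition in the proof of Lemma~\ref{whitneylevel}, I would exhibit $\mathcal{W}=\mathcal{W}_1\cup\mathcal{W}_2$ as the union of two proper subcontinua whose intersection is small; ideally it is the single level element containing $p$ (or $q$). The candidate strong center point is then $\mathcal{K}=K_p$ (or whichever gluing element the decomposition passes through). To show $\mathcal{K}$ fails $\mathbf{P_5}$, I will take open sets $\mathcal{U}\subset\mathcal{W}_1\setminus\mathcal{W}_2$ and $\mathcal{V}\subset\mathcal{W}_2\setminus\mathcal{W}_1$. I will then argue that any subcontinuum $\mathcal{C}$ of $\mathcal{W}$ meeting both must pass through $\mathcal{K}$, so no subcontinuum of $\mathcal{W}\setminus\{\mathcal{K}\}$ meets both $\mathcal{U}$ and $\mathcal{V}$. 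The argument uses the union map: $\bigcup\mathcal{C}$ is a subcontinuum of $X'$. Moving continuously within the level from elements near $a$ to elements near $b$ forces passing through an element that contains both $\pi(L_a)$-type and $\pi(L_b)$-type pieces, and by nestedness and the fixed value $t$ this element is exactly $\mathcal{K}$.

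I expect the main obstacle to be exactly this separation statement. One must show that $\mathcal{W}_1\cap\mathcal{W}_2$ reduces to a point, or at least that $\mathcal{K}$ separates in the strong sense. If the intersection is not a single point, I would instead show directly that $\mathcal{W}\setminus\{\mathcal{K}\}$ is not even connected. Either outcome gives a cut point and hence $\mathcal{Q}_6\Rightarrow\mathcal{Q}_5$.
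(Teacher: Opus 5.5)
Your items (1), (2) and (4) are fine. For (4), you push forward a composant of $X$ that misses $\pi^{-1}(x)$. This is a correct, self-contained substitute for the paper's citation of \cite[p.~241]{noncut}, and it could equally be run directly in the indecomposable continuum $X'$.

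The gap is in (3), and it goes beyond the step you flagged: your picture of $\mu^{-1}(t)$ is wrong. First, a proper subcontinuum of $X'$ never contains both $p$ and $q$. Its preimage would be the disjoint union of the components through $a,b,c,d$, and its image would then split into the images of $K_a\cup K_b$ and $K_c\cup K_d$. So there are no ``chains of up to four pieces''. Second, the glued elements at $p$ are not essentially unique. They form an arc $\mathcal{A}=\{F\in\mu^{-1}(t):p\in F\}$, parametrized by $L_a$ running along the order arc of subcontinua through $a$.

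With $\nu(C)=\mu(\pi(C))$ and $\mathcal{H}=\{\pi(C):C\in\nu^{-1}(t)\}\cong\nu^{-1}(t)$, one gets $\mu^{-1}(t)=\mathcal{H}\cup\mathcal{A}\cup\mathcal{C}$. Here $\mathcal{C}$ is the analogous arc at $q$. The arcs $\mathcal{A}$ and $\mathcal{C}$ meet $\mathcal{H}$ exactly in their endpoints $\pi(A_1),\pi(B_1)$ and $\pi(C_1),\pi(D_1)$, where $A_1,\dots,D_1$ are the unique elements of $\nu^{-1}(t)$ through $a,\dots,d$.

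This continuum has no cut point at all. Indeed, $\mathcal{H}$ is indecomposable, so $\mathcal{H}\setminus\{H\}$ is connected, and each arc is attached at both ends. So neither of your two ``outcomes'' (a one-point intersection $\mathcal{W}_1\cap\mathcal{W}_2$, or $\mathcal{W}\setminus\{\mathcal{K}\}$ disconnected) can occur.

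Your candidate $K_p=\pi(L_a\cup L_b)$ with both pieces nondegenerate lies in the open arc $\mathcal{A}\setminus\mathcal{H}$. Removing it leaves a set in which any two points lie in a common subcontinuum, by detouring through $\mathcal{H}$. So it is not a strong center point. If one piece is degenerate, you land on $\pi(A_1)$ or $\pi(B_1)$, which are strong center points, but not for the reason you give. Your heuristic that passing from elements near $a$ to elements near $b$ forces passage through $K_p$ fails, because $\mathcal{H}$ itself is a continuum containing $\pi(A_1)$ and $\pi(B_1)$.

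What is missing is the right choice of point and open sets, together with the lemma that makes it work. The strong center points are the elements of $\mathcal{H}$, and they are witnessed by the open sets $\mathcal{A}\setminus\mathcal{H}$ and $\mathcal{C}\setminus\mathcal{H}$.

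The key fact is that no proper subcontinuum $\mathcal{H}'$ of $\nu^{-1}(t)$ meets both $\{A_1,B_1\}$ and $\{C_1,D_1\}$. Otherwise $\bigcup\mathcal{H}'$ would be a subcontinuum of $X$ meeting two distinct composants, hence equal to $X$. That contradicts the fact that proper subcontinua of a Whitney level of a hereditarily indecomposable continuum do not cover $X$ \cite[Theorem 14.14.1]{hyperspaceofsets}.

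Given this, any subcontinuum of $\mu^{-1}(t)$ meeting both open sets must reach both pairs of endpoints. So some component of its intersection with $\mathcal{H}$ joins the two pairs, and therefore equals $\mathcal{H}$. Hence the subcontinuum contains every $H\in\mathcal{H}$. This is where hereditary indecomposability is genuinely used, beyond the nestedness you mention.
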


\begin{proof}
The conditions (1) and (2) follow immediately from Lemma
\ref{whitneylevel}. Since $X'$ is indecomposable, by \cite[p. 241]{noncut}, each point of $X'$ is a non-block point. 

We prove that condition (3) holds. Let $q:X \to X'$ be the quotient
map and let $t \in (0,\mu(X'))$. Define
$\nu:C(X) \to [0,\nu(X)]$ by

$$
\nu(C)=\mu(q(C))
$$

for each $C \in C(X)$. Then $\nu$ is a Whitney map. Let

$$
\mathcal{H}
=
\{\,q(C):C\in\nu^{-1}(t)\,\},
$$

$$
\mathcal{A}
=
\{\,F\in\mu^{-1}(t):q(a)\in F\,\},
\qquad
\mathcal{C}
=
\{\,F\in\mu^{-1}(t):q(c)\in F\,\}.
$$

We have

$$
\mu^{-1}(t)=\mathcal{H}\cup\mathcal{A}\cup\mathcal{C}.
$$

We show that each point of $\mathcal{H}$ is a strong center point of
$\mu^{-1}(t)$. Let $H\in\mathcal{H}$. It is not difficult to see that
$\mathcal{H}$ and $\nu^{-1}(t)$ are homeomorphic, and

$$
\mathcal{A}\cap\mathcal{H}
=
\{q(A_1),q(B_1)\},
$$

and

$$
\mathcal{C}\cap\mathcal{H}
=
\{q(C_1),q(D_1)\},
$$

where $A_1,B_1,C_1,D_1\in\nu^{-1}(t)$ and
$a\in A_1$, $b\in B_1$, $c\in C_1$, and $d\in D_1$.
Note that $A_1$, $B_1$, $C_1$, and $D_1$ are uniquely determined
since $X$ is hereditarily indecomposable.

We prove that there does not exist a proper subcontinuum of
$\mathcal{H}$ that intersects both
$\{q(A_1),q(B_1)\}$ and $\{q(C_1),q(D_1)\}$.
If such a subcontinuum exists, then there exists a proper
subcontinuum $\mathcal{H}'$ of $\nu^{-1}(t)$ that intersects both
$\{A_1,B_1\}$ and $\{C_1,D_1\}$.
We may assume that $\mathcal{H}'$ contains both $A_1$ and $C_1$.
Consequently,

$$
a,c\in\bigcup\mathcal{H}'.
$$

Since $a$ and $c$ belong to distinct composants of $X$, it follows that

$$
\bigcup\mathcal{H}'=X.
$$

However, this contradicts \cite[Theorem 14.14.1]{hyperspaceofsets}.

\smallskip

Note that $\mathcal{A} \setminus \mathcal{H}$ and $\mathcal{C} \setminus \mathcal{H}$ are open in $\mu^{-1}(t)$, and any subcontinuum of $\mu^{-1}(t)$ that intersects both of them must intersect both $\{q(A_1),q(B_1)\}$ and $\{q(C_1),q(D_1)\}$. Hence, at least one component of the intersection of such a continuum with $\mathcal{H}$ must intersect both $\{q(A_1),q(B_1)\}$ and $\{q(C_1),q(D_1)\}$. By the above observation, this component must coincide with $\mathcal{H}$, and hence must contain $H$. Therefore, $H$ is a strong center point of $\mu^{-1}(t)$. It follows that every point of $\mathcal{H}$ is a strong center point of $\mu^{-1}(t)$. This completes the proof. \end{proof}

Hence, we obtain the following result, which gives a negative answer to \cite[Question 2.9]{nonweak}.

\begin{corollary}
The properties $\mathcal{Q}_3$ (having a block point), $\mathcal{Q}_4$ (having a non-shore point), and $\mathcal{Q}_5$ (having a strong center point) are not Whitney-reversible properties.
\label{open1}
\end{corollary}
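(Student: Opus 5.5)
The plan is to exhibit a single continuum that witnesses the failure of Whitney-reversibility for all three properties simultaneously, namely the continuum $X'$ of the preceding theorem. Fix a hereditarily indecomposable continuum $X$, for instance the pseudo-arc. Since $X$ is indecomposable and nondegenerate, it has uncountably many pairwise disjoint composants, so we may choose points $a,b,c,d$ lying in four distinct composants. Let $X'$ be the quotient obtained by identifying $a$ with $b$ and $c$ with $d$. The preceding theorem then applies to $X'$ for every Whitney map $\mu$ for $C(X')$.

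Next, check the hypothesis in the definition of a Whitney-reversible property, separately for $\mathcal{Q}_5$, $\mathcal{Q}_4$ and $\mathcal{Q}_3$. By item (3) of the theorem, for every Whitney map $\mu$ and every $t\in(0,\mu(X'))$, the level $\mu^{-1}(t)$ has a strong center point, so it has $\mathcal{Q}_5$. Using the implications $\mathcal{Q}_5\Rightarrow\mathcal{Q}_4\Rightarrow\mathcal{Q}_3$ recorded in Section 2, every such level also has $\mathcal{Q}_4$ and $\mathcal{Q}_3$. Thus the hypothesis of the WRP definition holds for all three properties.

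Finally, show that the conclusion fails. By item (4), $X'$ does not have $\mathcal{Q}_3$. By the contrapositive of the same chain of implications, $X'$ therefore has neither $\mathcal{Q}_4$ nor $\mathcal{Q}_5$. Hence none of $\mathcal{Q}_3$, $\mathcal{Q}_4$, $\mathcal{Q}_5$ is Whitney-reversible.

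There is essentially no obstacle, since all of the work is contained in the preceding theorem. The only points to confirm are the existence of the four points in distinct composants, which is the standard fact that a nondegenerate indecomposable continuum has uncountably many composants, and that a hereditarily indecomposable continuum such as the pseudo-arc exists.
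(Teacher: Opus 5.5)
Your proposal is correct and is the same argument the paper uses: the continuum $X'$ from the preceding theorem has a strong center point in every positive Whitney level, hence also $\mathcal{Q}_4$ and $\mathcal{Q}_3$ via $\mathcal{Q}_5\Rightarrow\mathcal{Q}_4\Rightarrow\mathcal{Q}_3$, while $X'$ itself has only non-block points and so has none of the three properties. Your remarks on the pseudo-arc and on choosing four points in distinct composants only make explicit what the paper leaves implicit.
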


We can also see the following.

\begin{corollary}{\rm (See also \cite[Theorem 2.6]{nonweak})} 
The properties $\mathcal{P}_3$ (having only non-block points), $\mathcal{P}_4$ (having only shore points), and $\mathcal{P}_5$ (having only non-strong center points) are not Whitney properties.
\label{open2}
\end{corollary}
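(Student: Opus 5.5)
The plan is to use the continuum $X'$ of the preceding theorem as a counterexample and to get both failures from the implications among the $\mathcal{P}_i$ and $\mathcal{Q}_i$. Throughout, fix a hereditarily indecomposable continuum $X$, for instance the pseudo-arc. Such an $X$ has uncountably many composants, so we may choose $a,b,c,d$ in four distinct composants and form $X'$ as in the theorem.

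First, for each $t\in(0,\mu(X'))$ and every Whitney map $\mu$, item (3) says that $\mu^{-1}(t)$ has $\mathcal{Q}_5$. The implications $\mathcal{Q}_5\Rightarrow\mathcal{Q}_4\Rightarrow\mathcal{Q}_3$ then give that $\mu^{-1}(t)$ also has $\mathcal{Q}_4$ and $\mathcal{Q}_3$.

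On the other hand, item (4) says that $X'$ does not have $\mathcal{Q}_3$, so $X'$ has $\mathcal{P}_3$. By the implications $\mathcal{P}_3\Rightarrow\mathcal{P}_4\Rightarrow\mathcal{P}_5$, the continuum $X'$ has none of $\mathcal{Q}_3,\mathcal{Q}_4,\mathcal{Q}_5$. This shows that none of these three properties is Whitney-reversible, which proves Corollary~\ref{open1}.

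For Corollary~\ref{open2}, the same $X'$ works. It has $\mathcal{P}_3$, $\mathcal{P}_4$ and $\mathcal{P}_5$, but each positive level $\mu^{-1}(t)$ has $\mathcal{Q}_5$. Hence that level fails $\mathcal{P}_5$, and because $\mathcal{P}_3\Rightarrow\mathcal{P}_4\Rightarrow\mathcal{P}_5$, it also fails $\mathcal{P}_4$ and $\mathcal{P}_3$.

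The definition of a Whitney property requires $\mathcal{P}$ to hold at every $t\in[0,\mu(X'))$. It is therefore enough to exhibit a single Whitney map and a single $t>0$ where it fails. Since $X'$ is a nondegenerate continuum, $\mu(X')>0$, so the interval $(0,\mu(X'))$ is nonempty and such a $t$ exists.

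There is essentially no obstacle here, since the theorem does all the work. The only point to check is that a hereditarily indecomposable continuum with four distinct composants exists, and the pseudo-arc provides one.
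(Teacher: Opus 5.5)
Your proposal is correct and follows the paper's own route. The continuum $X'$ of the main theorem has $\mathcal{P}_3$, hence $\mathcal{P}_4$ and $\mathcal{P}_5$, by item (4), while every positive Whitney level of $X'$ has a strong center point by item (3); that level therefore fails $\mathcal{P}_5$, and so also $\mathcal{P}_4$ and $\mathcal{P}_3$. The only extra facts you use are standard: a hereditarily indecomposable continuum such as the pseudo-arc has four distinct composants, and a Whitney map on $C(X')$ exists.
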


%\begin{remark}By \cite[Theorem 3.2]{refinablematsuhashi}, being colocally connected is a Whitney property. Hence, having a point of non-colocal connectedness is a Whitney reversible property. Also, in \cite[Theorem 2.2]{matsuhashicut}, it is shown that having a cut point is not a Whitney reversible property. See also \cite[Corollary 2.4]{nonweak} for another result of this type.\end{remark}

The following table summarizes the known results and the results obtained in this paper concerning $\mathcal{P}_i$ and $\mathcal{Q}_i$ and their relations to Whitney properties, Whitney-reversible properties, strong Whitney-reversible properties, and sequential strong Whitney-reversible properties. For each entry, the corresponding reference is given in parentheses; although the reference does not explicitly state the answer as ``yes'' or ``no", the answer can be readily determined from the cited result. The results proved in this paper are shown in red.

\medskip

{\scriptsize \begin{tabular}{c|c|c|c|c}
\hline
  & WP & WRP & SWRP & SSWRP \\
\hline
$\mathcal{P}_1$ &  yes (\cite[Thm. 3.2]{refinablematsuhashi}) &  no (\cite[Ex. 3.3]{refinablematsuhashi}) &  no (\cite[Ex. 3.3]{refinablematsuhashi}) &  no (\cite[Ex. 3.3]{refinablematsuhashi}) \\
$\mathcal{Q}_1$  &   no (\cite[Ex. 3.3]{refinablematsuhashi}) &yes (\cite[Thm. 3.2]{refinablematsuhashi}) & yes (\cite[Thm. 3.2]{refinablematsuhashi}) & yes (\cite[Thm. 3.2]{refinablematsuhashi})\\
\hline
$\mathcal{P}_2$  & yes (\cite[Thm. 2.3]{nonweak}) &  no  (\cite[Ex. 3.3]{refinablematsuhashi}) &  no  (\cite[Ex. 3.3]{refinablematsuhashi}) &  no  (\cite[Ex. 3.3]{refinablematsuhashi})\\
$\mathcal{Q}_2$ &  no  (\cite[Ex. 3.3]{refinablematsuhashi}) &yes(\cite[Cor. 2.4]{nonweak}) & yes(\cite[Cor. 2.4]{nonweak}) & yes(\cite[Cor. 2.4]{nonweak})\\
\hline
$\mathcal{P}_3$  & \textcolor{red}{no (Cor. \ref{open2})}  & no (\cite[Ex. 3.3]{refinablematsuhashi}) & no (\cite[Ex. 3.3]{refinablematsuhashi}) & no (\cite[Ex. 3.3]{refinablematsuhashi})\\
$\mathcal{Q}_3$ & no (\cite[Ex. 3.3]{refinablematsuhashi}) & \textcolor{red}{no (Cor. \ref{open1})}  &  \textcolor{red}{no (Cor. \ref{open1})} & \textcolor{red}{no (Cor. \ref{open1})} \\
\hline
$\mathcal{P}_4$  & \textcolor{red}{no (Cor. \ref{open2})} & no (\cite[Ex. 3.3]{refinablematsuhashi}) &no (\cite[Ex. 3.3]{refinablematsuhashi}) &no (\cite[Ex. 3.3]{refinablematsuhashi})\\
$\mathcal{Q}_4$  &no (\cite[Ex. 3.3]{refinablematsuhashi}) &\textcolor{red}{no (Cor. \ref{open1})}  &  \textcolor{red}{no (Cor. \ref{open1})} & \textcolor{red}{no (Cor. \ref{open1})}\\
\hline
$\mathcal{P}_5$ & \textcolor{red}{no (Cor. \ref{open2})} & no (\cite[Ex. 3.3]{refinablematsuhashi}) & no (\cite[Ex. 3.3]{refinablematsuhashi}) & no (\cite[Ex. 3.3]{refinablematsuhashi})\\
$\mathcal{Q}_5$  & no (\cite[Ex. 3.3]{refinablematsuhashi}) & \textcolor{red}{no (Cor. \ref{open1})}  &  \textcolor{red}{no (Cor. \ref{open1})} & \textcolor{red}{no (Cor. \ref{open1})}\\
\hline
$\mathcal{P}_6$  & no  (\cite[Thm. 2.2]{matsuhashicut}) & no (\cite[Ex. 3.3]{refinablematsuhashi}) & no (\cite[Ex. 3.3]{refinablematsuhashi}) & no (\cite[Ex. 3.3]{refinablematsuhashi})\\
$\mathcal{Q}_6$  & no (\cite[Ex. 3.3]{refinablematsuhashi}) & no  (\cite[Thm. 2.2]{matsuhashicut}) & no  (\cite[Thm. 2.2]{matsuhashicut}) & no  (\cite[Thm. 2.2]{matsuhashicut})\\
\hline
\end{tabular}
}

\section{Non-existence of common models for certain classes of continua}

Here we recall some basic definitions and results from \cite{ortega}, which are based on ideas from \cite[Section 20]{MackowiakTymchatyn1984}. Since the formulations there involve some additional technicalities that are not essential, we present simplified versions of the definitions.

From now on, we denote the closed interval $[0,1]$ by $I$.

%Let $f:X\to Y$ be a map, then we denote the set $\{(x,f(x))  \in X \times Y : x \in X\}$ by $G(f)$. 

%\begin{definition}
%    A continuum \( X \) is said to be {\it arc-like} (respectively, {\it tree-like}) if for every \( \varepsilon > 0 \) there exists a continuous surjection \( f : X \to A \) onto an arc \( A \) (respectively, onto a tree \( T \)) such that for every point \( y \) in \( A \) (respectively, in \( T \)), the diameter of the fiber \( f^{-1}(y) \) is less than \( \varepsilon \).

%\end{definition}

\begin{definition}\label{def:tri-fold}
Let $X \subset I^\infty \times I$ be a continuum, and let 
$A^0, A^1 \subset X$ be nonempty closed subsets such that 
\[
X \cap (I^\infty \times \{0\}) = A^0 
\quad \text{and} \quad
X \cap (I^\infty \times \{1\}) = A^1 .
\]

Define the subsets $T_1^{X,A^0,A^1}, \ T_2^{X,A^0,A^1}$, and 
$T_3^{X,A^0,A^1}$ of $I^\infty \times \mathbb{R} \times \mathbb{R}$ by
\begin{align*}
T_1^{X,A^0,A^1} &= \{(x,t,0) : (x,t) \in X \}, \\
T_2^{X,A^0,A^1} &= \{(x,-t,0) : (x,t) \in X \}, \\
T_3^{X,A^0,A^1} &= \{(x,0,t) : (x,t) \in X \}.
\end{align*}

We define the \emph{tri-fold symmetric extension of $(X,A^0,A^1)$} by
\[
Y_{X,A^0,A^1} := 
T_1^{X,A^0,A^1} \cup T_2^{X,A^0,A^1} \cup T_3^{X,A^0,A^1}.
\]
%Note that
%\[
%T_1^X \cap T_2^X \cap T_3^X = \{(x,0,0) : (x,1) \in A'\} = \{(x,0,0) : (x,-1) \in A''\}.
%\]

\end{definition}

\begin{definition}\label{def:meandering}
Let $X$, $A^0$, $A^1$, and $Y_{X,A^0,A^1}$ be as in the previous definition. 
Define 
\[
Q := I^\infty \times \mathbb{R} \times \mathbb{R},
\]
with natural projections
\[
p: Q \times [0,3] \to Q, 
\quad
q: Q \times [0,3] \to [0,3].
\]

We call a sequence $\mathbf{m} = (m(n))_{n=0}^\infty$ with values in $\{1,2,3\}$
a \emph{meandering pattern} if $m(2n)=m(2n+1)$ for infinitely many $n \ge 0$
and, for each $i \in \{1,2,3\}$, the set $\{\,n \ge 0 : m(n)=i\,\}$ is infinite.

We construct a continuum $M_{X,A^0,A^1}^{\mathbf{m}}$ as follows. There exist:
\begin{itemize}
  \item topological copies $\{X_n\}_{n=0}^\infty$ of $X$, and
  \item a strictly decreasing sequence $(d_n)_{n=1}^\infty \subset (0,1]$ with $d_1=1$ and $\displaystyle \lim_{n\to\infty} d_n = 0$,
\end{itemize}
such that, for each $n\ge0$, we denote by $A_n^0$ and $A_n^1$ the subsets of $X_n$
corresponding to $A^0$ and $A^1$ under the identification of $X_n$ with $X$, and

\begin{enumerate}
  \item $X_0 \subset q^{-1}([1,3])$, with $X_0\cap q^{-1}(3)=A_0^1$ and $X_0\cap q^{-1}(1)=A_0^0$;
  \item for each $n\ge1$, $X_n \subset q^{-1}([d_{n+1},d_n])$;
  \item for each even $n\ge0$, $X_n\cap X_{n+1} = A_n^0 = A_{n+1}^0$;
  \item for each odd $n\ge0$, $X_n\cap X_{n+1} = A_n^1 = A_{n+1}^1$;
  \item for each $n\ge0$, $p|_{X_n}: X_n \to T^{X,A^0,A^1}_{m(n)}$ is a homeomorphism;
  \item the union
    \[
      M_{X,A^0,A^1}^{\mathbf{m}}
      := (Y_{X,A^0,A^1}\times\{0\})\;\cup\;\bigcup_{n=0}^\infty X_n
    \]
    is a continuum.
\end{enumerate}

We call $M_{X,A^0,A^1}^{\mathbf{m}}$ the \emph{meandering continuum with pattern}~$\mathbf{m}$ associated with $({X,A^0,A^1})$, and denote the collection of all such continua by~$\mathbb{M}_{X,A^0,A^1}$.

\end{definition}

  % First, we note from the construction of \( M_X^{\mathbf{m}} \) using (1), (2), and (4) that
%\[
%q^{-1}(\{0\}) = Y_X \cup \{0\} \quad \text{and} \quad q^{-1}((0,3]) = \bigcup_{i=0}^{\infty} Z_i,
%\]
%and thus
%\[
%q^{-1}(0) \cup q^{-1}((0,3]) = M_X^{\mathbf{m}}.
%\]
%First, note that
%$(\pi \circ F)|_{q^{-1}(0)} : Y_X \times \{0\} \to T_1^X$, and  for any sequence \( \{y_i\} \subset T_1^X \) such that \( y_i \to y \in T_1^X \), we can easily see that 
%\[
%((\pi \circ F)|_{q^{-1}(0)})^{-1}(y_i)  \to ((\pi \circ F)|_{q^{-1}(0)})^{-1}(y).
%\]
%This implies that \( (\pi \circ F)|_{q^{-1}(\{0\})} \) is an open map (see \cite[Theorem ?]{nadler1}).

%On the other hand, since $F|_{q^{-1}((0,3])} : q^{-1}((0,3]) \to F(q^{-1}((0,3]))$
%is a homeomorphism and \( \pi  | _{F(q^{-1}((0,3]))} : F(q^{-1}((0,3])) \to T^X_1\) is an open map, it follows that \( (\pi \circ F)|_{q^{-1}((0,3])} : q^{-1}((0,3]) \to T^X_1 \) is also an open map.  

%\begin{lemma}\label{surjection}
%Let \(J=[0,1]\) be the mirror‑glued continuum with 
%\[
%J'=[0,1],\quad J''=[0,1],\quad A'=\{1\},\quad A''=\{0\}.
%\]
%Fix a meandering pattern \(\mathbf{m}=(m(i))_{i=0}^\infty\subset \{1,2,3\}\), and let 
%\(M_J^{\mathbf{m}}\in\mathbb{M}_J\) be the corresponding continuum. Then for any mirror‑glued continuum \(X=(X',A')\textrm{–}(X'',A'')\), there exist 
%\[
%M_X^{\mathbf{m}}\in\mathbb{M}_X
%\quad\text{and}\quad
%f\colon M_X^{\mathbf{m}}\to M_J^{\mathbf{m}}
%\]
%a continuous surjection.
%\end{lemma}

The following result can be obtained by a proof completely similar to that of \cite[Corollary 2.8]{ortega}.

\begin{theorem}{\rm (see also \cite[Corollary 2.8]{ortega})}
\label{cor:no-common-model} Let $X$, $A^0$ and $A^1$ be as in  Definition \ref{def:tri-fold}. 
 Then, the family $\mathbb{M}_{X,A^0,A^1}$ of meandering continua has no common model.\label{ortega}
\end{theorem}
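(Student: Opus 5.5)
The plan follows the strategy of \cite[Corollary 2.8]{ortega}, which goes back to \cite[Section 20]{MackowiakTymchatyn1984}. Suppose, towards a contradiction, that $M$ is a common model for $\mathbb{M}_{X,A^0,A^1}$. The meandering patterns form an uncountable family: for instance, choose the values $m(2n)=m(2n+1)$ freely on infinitely many blocks, keeping every value in $\{1,2,3\}$ infinitely often. So we obtain uncountably many pairwise ``combinatorially different'' continua $M^{\mathbf{m}}$, each with a surjection $f_{\mathbf{m}}\colon M\to M^{\mathbf{m}}$. The aim is to show that two sufficiently different patterns cannot both be images of $M$. This is done either by a Baire category / separability argument on the space $C(M,I^\infty\times\mathbb{R}^2\times[0,3])$, or by showing that a single map from $M$ determines the pattern.

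\textbf{Step 1: intrinsic structure of $M^{\mathbf{m}}$.} The subcontinuum $Y_{X,A^0,A^1}\times\{0\}$ is the limit set of the ray of copies $X_n$. Each $X_n$ projects by $p$ homeomorphically onto the leg $T_{m(n)}$, and consecutive copies are glued alternately along $A^0$ (the common ``center'' of the triod-like set $Y$) and along $A^1$ (the ends of the legs). The pattern $\mathbf{m}$ records which leg the ray shadows at each stage. The key observation is that the condition $m(2n)=m(2n+1)$ for infinitely many $n$ forces the ray to fold back on a single leg infinitely often. Combined with each leg being visited infinitely often, this makes the sequence of legs visited a topological invariant up to tail equivalence. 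It can be read off from the ways subcontinua of the ray approach $Y\times\{0\}$.

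\textbf{Step 2: pulling back to $M$.} Given a surjection $f\colon M\to M^{\mathbf{m}}$, use the compactness of $M$ and the fact that $f^{-1}(Y\times\{0\})$ is closed. Then the preimages of the sets $q^{-1}([d_{n+1},d_n])$ accumulate only on $f^{-1}(Y\times\{0\})$, so far enough along the ray the map $f$ is ``uniformly'' determined by finitely many subcontinua of $M$. Following Ortega's argument, fix a countable base for $M$ and countably many finite chains of open sets. Each surjection onto some $M^{\mathbf{m}}$ then yields, through which chain elements are mapped near which leg $T_i$, a code determining the tail of $\mathbf{m}$ up to the allowed equivalence. There are only countably many codes but uncountably many inequivalent patterns, which gives the contradiction.

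\textbf{Step 3: independence from $X$.} Ortega's proof is written for a specific model such as $X=I$ with $A^0=\{0\}$ and $A^1=\{1\}$. I would check that only the following facts are used:
\begin{itemize}
\item $X\cap(I^\infty\times\{0\})=A^0$ and $X\cap(I^\infty\times\{1\})=A^1$, so each $X_n$ spans the full height $[d_{n+1},d_n]$ and connects the two gluing sets;
\item $p|_{X_n}$ is a homeomorphism onto the prescribed leg.
\end{itemize}
The coordinate $q$ and the legs, rather than the internal structure of $X$, distinguish the patterns. A possible route is to compose with the map $(x,s,t,r)\mapsto(\text{leg-coordinate},r)$, which sends $M^{\mathbf{m}}$ onto the corresponding meandering continuum for the arc, so the pattern invariant transfers.

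The main obstacle is Step 2. One must ensure that the pattern can be recovered from an arbitrary (non-monotone) surjection from $M$, up to a countable ambiguity. The delicate point is that $f$ may wrap pieces of $M$ across several copies $X_n$, so the code has to be defined via the limiting behaviour near $Y\times\{0\}$. That is exactly where the folding condition $m(2n)=m(2n+1)$ is used.
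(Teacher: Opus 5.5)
Your plan is essentially the paper's. The paper gives no independent argument; it only asserts that the result follows by a proof completely similar to that of \cite[Corollary 2.8]{ortega}, and adds in Remark~\ref{meandering} that requiring each leg index to occur infinitely often is harmless, which your choice of an uncountable family of patterns respects.

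Two cautions on the sketch itself. First, the aim as you initially state it (that two sufficiently different patterns cannot both be images of $M$) is false, since a wedge of $M^{\mathbf m}$ and $M^{\mathbf m'}$ maps onto each summand. What you actually import from \cite{ortega} is a rigidity statement: surjections from $M$ that are uniformly close (in your terms, that have the same code) cannot land on inequivalent patterns. That statement is exactly what makes your countable-code count work. Second, in Step~3 the map forgetting the $I^\infty$-coordinate sends $X_n$ onto an arc only for a member of $\mathbb{M}_{X,A^0,A^1}$ in which the height on $X_n$ depends only on the last coordinate of $X$. Choosing such a member is allowed, and it suffices for the reduction to the arc case.
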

    
The notation introduced in this section will be used throughout the subsequent sections without further explanation.

\begin{remark}\label{meandering}

In the original construction of \cite{MackowiakTymchatyn1984}, each of the three branch indices $i \in \{1,2,3\}$ naturally appears infinitely often in the sequence. Although the formal definition of a meandering pattern in \cite{ortega} omitted the explicit requirement that $\{\,n \ge 0 : m(n)=i\,\}$ be infinite for each $i \in \{1,2,3\}$, explicitly imposing this condition in Definition \ref{def:meandering} aligns with the original construction in \cite{MackowiakTymchatyn1984} and does not affect the validity of Theorem \ref{cor:no-common-model}.
\end{remark}

\begin{remark}
Condition~(1) in Definition \ref{def:meandering} is inessential and is imposed only to simplify the argument in \cite{ortega}, which is based on \cite[Section 20]{MackowiakTymchatyn1984}. Therefore, the figures of meandering continua appearing later in this paper do not reflect this condition.

\end{remark}

%In this section, we 
%prove that the class of continua that witnesses the failure of the Whitney property for the property of consisting entirely of shore points has no common model 

\begin{theorem}
\label{shore}
Let $\mathcal{C}$ be the class of all continua $Z$ satisfying the following properties:
\begin{enumerate}
\item for each Whitney map $\mu \colon C(Z) \to [0, \mu(Z)]$ and for each sufficiently small $t \in (0, \mu(Z))$, $\mu^{-1}(t)$ has property $\mathcal{Q}_5$ (i.e., $\mu^{-1}(t)$ contains a strong center point), and
\item $Z$ does not have property $\mathcal{Q}_3$ (i.e., $Z$ has only non-block points).
\end{enumerate}
Then $\mathcal{C}$ does not admit a common model.
\end{theorem}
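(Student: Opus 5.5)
The plan is to exhibit a triple $(X,A^0,A^1)$ for which every meandering continuum in $\mathbb{M}_{X,A^0,A^1}$ lies in $\mathcal{C}$. Once this is done, Theorem~\ref{cor:no-common-model} gives the claim: a common model for $\mathcal{C}$ would in particular be a common model for $\mathbb{M}_{X,A^0,A^1}$. The natural building block is the continuum $X'$ from the main theorem of Section~4. Take a hereditarily indecomposable continuum $H$ (for instance the pseudo-arc) and four points $a,b,c,d$ of $H$ in distinct composants, and let $H'$ be the quotient identifying $a$ with $b$ and $c$ with $d$. Embed $H'$ in $I^\infty\times I$ so that a point $e$ of $H'$ (in a composant other than those of $a,b,c,d$) goes to level $0$ and another such point $e'$ goes to level $1$, while the rest of $H'$ lies strictly between. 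Set $X=H'$, $A^0=\{e\}$, $A^1=\{e'\}$. Any meandering continuum $M=M^{\mathbf m}_{X,\{e\},\{e'\}}$ is then a chain of copies $X_n$ of $H'$, glued end to end at single points, converging to a ``tripod'' $Y\times\{0\}$ of three copies of $H'$ glued at a point.

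\textbf{Condition (2): every point of $M$ is a non-block point.} For $x\in M$ I would exhibit an increasing sequence of subcontinua whose union is dense in $M\setminus\{x\}$. If $x$ lies in some $X_n$, the copy $X_n$ is indecomposable, so by \cite[p.~241]{noncut} there are subcontinua $B_k$ of $X_n\setminus\{x\}$ increasing to a dense subset of $X_n$. Their composants are dense, and we may assume each $B_k$ contains whichever gluing points of $X_n$ lie outside the composant of $x$. One then adjoins the whole chain of the other $X_m$'s together with $Y\times\{0\}$; the tail is a continuum, since the chain accumulates on $Y\times\{0\}$. The edge case is when $x$ is a gluing point itself, i.e.\ a cut point of the chain. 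Here I would have to check that this is harmless, or choose $e,e'$ so that it is. The honest worry is that junction points really are cut points of $M$, hence block points. To avoid this I would instead take $A^0$ and $A^1$ to be nondegenerate: for example, glue along whole copies of a subcontinuum, or use $X=H'\times$ something, so that removing any one point does not disconnect. Points of $Y\times\{0\}$ are handled similarly, using density of the composants in each branch $T_i$ and the approximating chain.

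\textbf{Condition (1): small Whitney levels contain strong center points.} Fix a Whitney map $\mu$ on $C(M)$ and choose $t$ smaller than $\mu$ of every subcontinuum of diameter larger than a fixed fraction of $\operatorname{diam} X_0$. Restrict $\mu$ to $C(X_0)$, where $X_0$ is a copy of $H'$. Let $q\colon H\to X_0$ be the quotient map and put $\nu=\mu\circ q$. The set $\mathcal{H}=\{q(C)\colon C\in\nu^{-1}(t)\}$ is the same as in the Section~4 proof. I would repeat that argument inside $\mu^{-1}(t)$: the elements of $\mu^{-1}(t)$ containing the identified points $q(a)$ and $q(c)$ form sets $\mathcal{A}\setminus\mathcal{H}$ and $\mathcal{C}\setminus\mathcal{H}$. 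These are open in $\mu^{-1}(t)$ because $t$ is small, so their members stay near $q(a)$ and $q(c)$, far from the gluing points. Any subcontinuum of $\mu^{-1}(t)$ meeting both of these sets must pass through all of $\mathcal{H}$, by \cite[Theorem 14.14.1]{hyperspaceofsets}, exactly as before. Hence points of $\mathcal{H}$ are strong center points of $\mu^{-1}(t)$.

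\textbf{Main obstacle.} The delicate step is the localization in (1). Elements of $\mu^{-1}(t)$ near the gluing points $e,e'$ could a priori provide alternative paths between $\mathcal{A}$ and $\mathcal{C}$. Ruling this out requires showing that every subcontinuum of $\mu^{-1}(t)$ joining those open sets has a component of its trace on $\mathcal{H}$ joining $\{q(A_1),q(B_1)\}$ to $\{q(C_1),q(D_1)\}$. I would try to prove this using that $e$ and $e'$ lie in composants of $H$ different from those of $a,b,c,d$. Then sets in $\mu^{-1}(t)$ leaving $X_0$ avoid neighbourhoods of $q(a)$ and $q(c)$, and the level $\mu^{-1}(t)$ splits as $\mathcal{H}$ union pieces meeting $\mathcal{H}$ only at elements through $e,e'$. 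The set $\mathcal{H}$ is not separated there, but the argument only needs the $a$–$c$ separation within $\mathcal{H}$.
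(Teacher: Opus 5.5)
\textbf{There is a genuine gap, and it is in condition (2), not (1).}

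\emph{The junction points are cut points.} With $A^0=\{e\}$ and $A^1=\{e'\}$, let $\{p_n\}=X_n\cap X_{n+1}$. The closed sets $X_0\cup\cdots\cup X_n$ and $(Y\times\{0\})\cup\bigcup_{k>n}X_k$ cover $M$ and meet only in $p_n$, so $p_n$ is a cut point of $M$. Since $\mathbf{P}_3\Rightarrow\mathbf{P}_6$, $p_n$ is a block point, and $M\notin\mathcal{C}$.

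\emph{Points of $X_n$ away from the junctions fail too.} Fix $n\ge 1$ and $x\in X_n$ other than its two junction points. Any increasing sequence of subcontinua of $M\setminus\{x\}$ with dense union must eventually contain continua meeting both $X_0\cup\cdots\cup X_{n-1}$ and $\bigcup_{k>n}X_k$ outside $X_n$. Each such continuum meets $X_n$ in a proper subcontinuum of $X_n\cong H'$ containing both junction points. Because $e,e'$ avoid the composants of $a,b,c,d$, that subcontinuum is $q(C)$ for a continuum $C\subset H$ containing $e$ and $e'$. When $x$ lies in the composant of $e$, $C$ lies inside the smallest subcontinuum of $H$ containing $e$ and $x$, since subcontinua of $H$ through $e$ form a chain. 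That set is nowhere dense in $X_n$, so density fails. Hence every point of that composant is a block point. If $e,e'$ lie in different composants, every such point of $X_n$ is a block point.

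\emph{Your proposed remedy does not evidently help.} Suppose the gluing sets are nondegenerate subcontinua lying in composants of $H$ other than those of $a,b,c,d$. Hereditary indecomposability forces every crossing continuum to contain both of them, and the same trapping argument produces block points. Replacing $H'$ by a product $H'\times(\cdot)$ destroys the hereditary indecomposability that your argument for (1) relies on.

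\textbf{The missing idea} is that members of $\mathcal{C}$ need not be meandering continua; they only need to map onto them. The paper takes $X$ to be an arc and replaces the meandering part of $M^{\mathbf m}_X$ by the boundary line $B'^{\mathbf m}_X$ of a thin band around it. This gives $Z^{\mathbf m}_X=(Y_X\times\{0\})\cup B'^{\mathbf m}_X$.
\begin{itemize}
\item Both ends of this line accumulate on the whole triod, which gives condition (2).
\item The strong center point of a small level comes from the meandering itself. A subcontinuum of $\mu^{-1}(t)$ joining elements lying in the line to elements Hausdorff-close to a small triod $T\times\{0\}$ at the branch point must contain the end arc $L_1^X\times\{0\}$.
\item The collapsing surjection $Z^{\mathbf m}_X\to M^{\mathbf m}_X$ then transfers Theorem~\ref{cor:no-common-model} to $\mathcal{C}$.
\end{itemize}

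Incidentally, the obstacle you flagged in (1) is not the real one. In the singleton case $X_0$ meets the rest of $M$ only in $p_0$, and exactly one element of the level inside $X_0$ contains $p_0$; it lies in $\mathcal{H}$. So your Section~4 argument does localize. It localizes, however, precisely because of the one-point attachment that makes $p_0$ a cut point. That is why (1) and (2) pull against each other in your construction.
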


\begin{proof}
The idea of the proof   is taken from \cite[Example~2.7]{nonweak}. %Before proceeding, we note that some of the notation appearing in this example is explained in Section~2.
 Let $X$ be an arc. We may assume that $X \subset I^\infty \times I$ and that the endpoints of $X$ lie in $I^\infty \times \{0\}$ and $I^\infty \times \{1\}$.  
Let $A^0=X \cap (I^\infty \times \{0\})$ and $A^1=X \cap (I^\infty \times \{1\})$. 
We may further arrange the construction so that $X$ is a line segment. So far we have written $M^{\mathbf{m}}_{X,A^0,A^1}$ to indicate the dependence on $A^0$ and $A^1$. 
Since $X$ is a line segment and $A^0$, $A^1$ are its endpoints, we omit $A^0$ and $A^1$ from the notation from now on. The same convention will be applied to all other notations involving $A^0$ and $A^1$.

Let $\mathbf{m}$ be a meandering pattern. 
We slightly thicken $q^{-1}((0,3]) \cap M^\mathbf{m}_{X}$ and replace it by the product
$B_{X}^{\mathbf m} := (q^{-1}((0,3]) \cap M^\mathbf{m}_{X}) \times I$.
We may assume that $B_{X}^{\mathbf m} \subset Q \times [0,3]$.
 Here, $B_{X}^{\mathbf{m}}$ is a band along  $q^{-1}((0,3])\cap M^\mathbf{m}_{X}$ whose width decreases as the height goes downward, and the closure of $B^{\mathbf{m}}_{X}$ is equal to $ (Y_{X} \times \{0\}) \cup B_{X}^{\mathbf{m}} $. Moreover, let $B'^{\mathbf{m}}_{X}$ denote the manifold boundary of $B_{X}^{\mathbf{m}}$. 
Then $B'^{\mathbf{m}}_{X}$ is homeomorphic to the real line, and note that the closure of $B'^{\mathbf{m}}_{X}$ is equal to $(Y_{X} \times \{0\}) \cup B'^{\mathbf{m}}_{X}$. We can write $B'^{\mathbf{m}}_{X}=P_X^{\mathbf{m}} \cup Q_X^{\mathbf{m}}$, where both $P_X^{\mathbf{m}}$ and $Q_X^{\mathbf{m}}$ are homeomorphic to $[0, \infty)$ and $|P_X^{\mathbf{m}} \cap Q_X^{\mathbf{m}}|=1$.  Let $ Z_{X}^{\mathbf{m}}=(Y_{X} \times \{0\}) \cup B'^{\mathbf{m}}_{X}$ 
 (see Figure~\ref{M}). It is easy to see that $Z_X^{\mathbf{m}}$ has only non-block points. 
 
 \begin{figure}[ht]
\includegraphics[height=6cm]{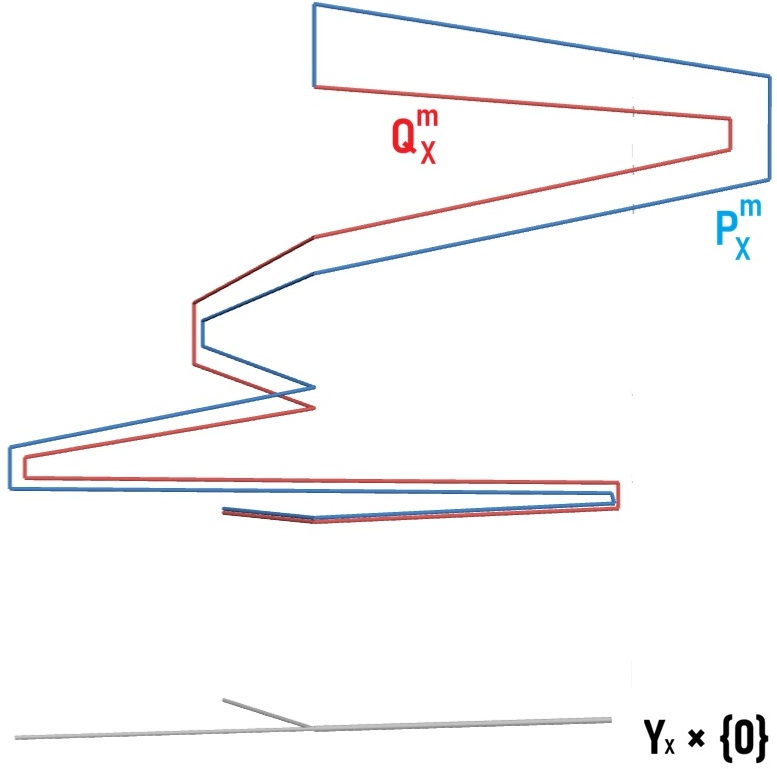}
\caption{$Z^\mathbf{m}_X$}
\label{M} 
\end{figure}
 %Also, we may assume that there exists a continuous surjection from $Z_X^\mathbf{m}$ onto $M_X^\mathbf{m}$  that 
 %is the identity on $Y_X \times \{0\}$.
%(For example, by choosing $B'^{\mathbf{m}}_X$ to be a broken line as illustrated in the figure, one can construct a continuous map from $Z_X^\mathbf{m} $ to $M_X^\mathbf{m}$ that is the identity on $Y_X$). 
 
Let $\mu : C(Z_X^{\mathbf{m}}) \to [0,\mu(Z_X^{\mathbf{m}})]$ be a Whitney map. Let %$A = \{(x,t,0) \in T^X_1 \mid \frac{1}{2} \le t \le \frac{3}{4}\}$
 $t_0 = \min \{\mu(T^X_j \times \{0\}): j\in\{1,2,3\}\}$.  We may assume that $t_0=\mu(T^X_1 \times \{0\})$. Fix a sufficiently small $t \in (0, t_0)$ and a simple triod $T \subset Y_X$ satisfying $\mu(T \times \{0\}) = t$. Write $T = J_1 \cup J_2 \cup J_3$, where $J_j \subset T^X_j$ for $j \in \{1,2,3\}$.  Put $\varepsilon_1 = \frac{1}{2} \min \{{\rm diam} J_1, {\rm diam} J_2, {\rm diam} J_3, {\rm diam} (T^X_1 \setminus J_1), {\rm diam} (T^X_2 \setminus J_2),  {\rm diam} (T^X_3 \setminus J_3)\}$. 
Then, there exists $s_0 > 0$ such that if $C \in C(Z_X^{\mathbf{m}})$ satisfies  $C \subset q^{-1}((0,s_0])$, then $H_d(C,T
\times \{0\}) > \varepsilon_1$, where $d$ is an admissible metric on $Z_X^{\mathbf{m}}$ and $H_d$ is the Hausdorff metric on $C(Z_X^{\mathbf{m}})$ that is induced by $d$. Also, there exists $\varepsilon_2 > 0$ such that if $C \in C(Z_X^{\mathbf{m}})$ satisfies  $C \cap q^{-1}([s_0,3]) \neq \emptyset $, then $H_d(C,T
\times \{0\}) \ge \varepsilon_2$. %Furthermore, there exists $\varepsilon_3 > 0$ such that if a subcontinuum $ \mathcal{A} \subset  \mu^{-1}(t_0)$ satisfies  $H_{H_d}(\mathcal{A}, \mu^{-1}(t_0)) < \varepsilon_3$, then there exists $C \in \mathcal{A}$ such that $C \cap q^{-1}((0,3]) \neq \emptyset$.

Put $\varepsilon= \min \{\varepsilon_1, \varepsilon_2\}$. Let $\mathcal{U}=\{C \in \mu^{-1}(t) : C \subset q^{-1}((0,3])\}$ and $\mathcal{V}=\{C \in \mu^{-1}(t) : H_d(T \times \{0\}, C) < \varepsilon\}$. It is easy to see that $\mathcal{U}$ and $\mathcal{V}$ are open subsets of $\mu^{-1}(t)$. We show that if $\mathcal{A}$ is a subcontinuum of  $\mu^{-1}(t)$  such that $\mathcal{U} \cap \mathcal{A} \neq \emptyset \neq \mathcal{V} \cap \mathcal{A}$, then $L_1^X \times \{0\} \in \mathcal{A}$, where $L_1^X$ is the subarc of $T_1^X$ such that $\mu(L_1^X \times \{0\})=t$ and $L_1^X$ contains an end point of $Y_X$.

 Then, it is clear that  $\bigcup \mathcal{A} \cap q^{-1}((0,3]) \neq \emptyset$. Also, by the choice of 
 $\varepsilon$, $\bigcup \mathcal{A} \cap q^{-1}(0) \neq \emptyset$. %However, note that there does not exists an element of $\mathcal{A}$ that intersects both of $q^{-1}(0)$ and $q^{-1}((0,3])$.
Since $\bigcup \mathcal{A}$ is connected, we see that  there exists a sequence $\{A_i\}_{i=1}^\infty$ of $\mathcal{A}$ and a decreasing sequence
$\{c_i\}_{i=1}^\infty \subset (0,3]$ which converges to 0 such that $A_i \cap q^{-1}(c_i) \neq \emptyset$ for each $i \ge 1$. Then, $A_i \subset q^{-1}((0,3])$ for each $i > 0$. 
Although an individual $A_i$ may intersect both $P_X^{\mathbf m}$ and $Q_X^{\mathbf m}$,
such continua cannot occur infinitely often in a sequence that remains in $\mu^{-1}(t)$ while descending to level $0$.
Indeed, any $A_i$ extending from the intersection $P_X^{\mathbf m} \cap Q_X^{\mathbf m}$ to near level $0$ would spread along the meandering structure to approximate the shape of $Y_X$, forcing $\mu(A_i) \ge t_0 > t$.
Hence, by passing to a subsequence if necessary, we may assume that $A_i \subset P_X^{\mathbf m}$ for all $i \ge 1$.

Since $P^{\mathbf{m}}_X$ is homeomorphic to $[0,\infty)$, each component of $\{C \in \mathcal{A} : C \subset P^{\mathbf{m}}_X\}$ is homeomorphic to a one-point set or an interval. If there exist three different components of $\{C \in \mathcal{A} : C \subset P^{\mathbf{m}}_X\}$, then one of them is separated from $\{C \in \mathcal{A} : C \cap Q^{\mathbf{m}}_X \neq \emptyset\}$ and $\{C \in \mathcal{A} : C \subset q^{-1}(0)\}$. Therefore, this component is also a component of $\mathcal{A}=\{C \in \mathcal{A} : C \subset P^{\mathbf{m}}_X\} \cup \{C \in \mathcal{A} : C \cap Q^{\mathbf{m}}_X \neq \emptyset\} \cup \{C \in \mathcal{A} : C \subset q^{-1}(0)\}$. This contradicts the fact that $\mathcal{A}$ is connected. Hence, the number of components of $\{C \in \mathcal{A} : C \subset P^{\mathbf{m}}_X\}$ is at most two. Thus, we may assume that for each $i \ge 1$, there exists a subarc $\mathcal{I}_i$ of $\{C \in \mathcal{A} : C \subset P^{\mathbf{m}}_X\}$ that joins $A_i$ and $A_{i+1}$. In view of the meandering shape of $P_X^{\mathbf m}$ and \cite[Exercise 4.33 (b)]{nadler1}, for infinitely many $i \ge 1$, the subarc $\mathcal I_i$ joining $A_i$ and $A_{i+1}$ contains a continuum $A'_i \in \mu^{-1}(t)$ located near the turn of $P_X^{\mathbf m}$ such that $A'_i \to L_1^X \times \{0\}$. 
Hence, $L^X_1 \times \{0\} \in \mathcal A$.  %which is a contradiction.
%Hence, there exists $s_1 \in (0,3]$ such that $\bigcup \mathcal{A} \subset q^{-1}(0) \cup q^{-1}([s_1,3])$. Since $\mathcal{A}$ is a continuum,
%$\bigcup \mathcal{A} \subset q^{-1}(0)$ or $q^{-1}([s_1,3])$.
%However, both contradict to $H_{H_d}(\mathcal{A}, \mu^{-1}(t_0)) < \varepsilon$. 
 Thus, $L^X_1 \times \{0\}$ is  a strong  center  point of $\mu^{-1}(t)$. Therefore, $Z_X^\mathbf{m} \in \mathcal{C}$ and we see that $\{ Z_X^\mathbf{m} : \mathbf{m} \text{ is a meandering pattern} \} \subset \mathcal{C}$.

The space $M_X^\mathbf{m}$ is obtained from $Z_X^\mathbf{m}$ by collapsing the boundary of the parts
thickened along the meandering pattern onto $M_X^\mathbf{m}$; this collapse yields a
continuous surjection which is the identity on $Y_X \times \{0\}$ for each
meandering pattern $\mathbf{m}$. Hence, together with Theorem \ref{cor:no-common-model}, we see that the family
$\{ Z_X^\mathbf{m} : \mathbf{m} \text{ is a meandering pattern}\}$ admits no common model. Therefore, neither does $\mathcal{C}$.
  \end{proof}

 \begin{corollary}
For each property $\mathcal{Q}_3$ (having a block point), $\mathcal{Q}_4$ (having a non-shore point), and $\mathcal{Q}_5$ (having a strong center point), the class of continua witnessing the failure of the sequential strong Whitney-reversible property admits no common model.
\label{57}
\end{corollary}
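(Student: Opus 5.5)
The corollary follows from Theorem~\ref{shore} by a monotonicity argument on classes: any continuum in $\mathcal{C}$ witnesses the failure of the SSWRP for each of $\mathcal{Q}_3,\mathcal{Q}_4,\mathcal{Q}_5$. A superclass of a class with no common model has no common model either, since a common model for the bigger class would serve for the smaller one.

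For $i\in\{3,4,5\}$, let $\mathcal{W}_i$ be the class of continua $Z$ that admit a Whitney map $\mu$ and a sequence $t_n\to 0$ in $(0,\mu(Z))$ such that each $\mu^{-1}(t_n)$ has $\mathcal{Q}_i$, while $Z$ itself fails $\mathcal{Q}_i$. These are exactly the continua witnessing that $\mathcal{Q}_i$ is not an SSWRP.

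Take $Z\in\mathcal{C}$ and any Whitney map $\mu$ for $C(Z)$. Condition (1) of Theorem~\ref{shore} gives $s>0$ such that $\mu^{-1}(t)$ has $\mathcal{Q}_5$ for all $t\in(0,s)$. Choose $t_n\in(0,s)$ with $t_n\to 0$. By the implications $\mathcal{Q}_5\Rightarrow\mathcal{Q}_4\Rightarrow\mathcal{Q}_3$, each $\mu^{-1}(t_n)$ also has $\mathcal{Q}_4$ and $\mathcal{Q}_3$. Condition (2) says $Z$ fails $\mathcal{Q}_3$. By the same chain of implications (contrapositively), $Z$ then fails $\mathcal{Q}_4$ and $\mathcal{Q}_5$. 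Hence $Z\in\mathcal{W}_3\cap\mathcal{W}_4\cap\mathcal{W}_5$, so $\mathcal{C}\subset\mathcal{W}_i$ for each $i$.

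Suppose some $\mathcal{W}_i$ had a common model $M$. Then $M$ maps continuously onto every member of $\mathcal{C}$, so $M$ is a common model for $\mathcal{C}$, contradicting Theorem~\ref{shore}.

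The only step needing care is interpretation rather than mathematics: fix the phrase ``the class of continua witnessing the failure'' as the class $\mathcal{W}_i$ above. One must also note that ``sufficiently small $t$'' in Theorem~\ref{shore}(1) indeed supplies a sequence tending to $0$, which it does because the condition holds for every Whitney map and hence for at least one.
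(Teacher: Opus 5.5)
Your argument is correct and matches the paper, which gives no separate proof and treats the corollary as immediate from Theorem~\ref{shore}. You supply the omitted step: the class $\mathcal{C}$ of Theorem~\ref{shore} lies inside each SSWRP-witnessing class for $\mathcal{Q}_3,\mathcal{Q}_4,\mathcal{Q}_5$ by the chains $\mathcal{Q}_5\Rightarrow\mathcal{Q}_4\Rightarrow\mathcal{Q}_3$ and $\mathcal{P}_3\Rightarrow\mathcal{P}_4\Rightarrow\mathcal{P}_5$, together with the standard existence of a Whitney map for $C(Z)$. Hence a common model for any of those classes would be a common model for $\mathcal{C}$, which Theorem~\ref{shore} rules out.
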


\begin{corollary}
For each property $\mathcal{P}_3$ (having only non-block points), $\mathcal{P}_4$ (having only shore points), and $\mathcal{P}_5$ (having only non-strong center points), the class of continua witnessing the failure of the Whitney property admits no common model.
\label{58}
\end{corollary}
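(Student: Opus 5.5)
We have to prove Corollary \ref{58}: for each $i\in\{3,4,5\}$, the class of continua witnessing the failure of the Whitney property for $\mathcal{P}_i$ admits no common model. That class is
\[
\mathcal{W}_i=\{Z : Z \text{ has } \mathcal{P}_i \text{ and } \mu^{-1}(t) \text{ fails } \mathcal{P}_i \text{ for some Whitney map } \mu \text{ for } C(Z) \text{ and some } t\in[0,\mu(Z))\}.
\]
Since $\mathcal{Q}_i$ is the negation of $\mathcal{P}_i$, the second condition says that some positive Whitney level has $\mathcal{Q}_i$. Level $0$ is homeomorphic to $Z$, so it has $\mathcal{P}_i$ and the witnessing $t$ is positive.

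The plan is to show that $\mathcal{W}_i$ contains the class $\mathcal{C}$ of Theorem \ref{shore}. The result then follows at once: a common model for $\mathcal{W}_i$ would, by restriction, be a common model for $\mathcal{C}$, and Theorem \ref{shore} rules this out.

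Take $Z\in\mathcal{C}$.
\begin{enumerate}
\item Condition (2) of Theorem \ref{shore} says that $Z$ has $\mathcal{P}_3$. By the implications $\mathcal{P}_3\Rightarrow\mathcal{P}_4\Rightarrow\mathcal{P}_5$ recorded in Section 2, $Z$ has $\mathcal{P}_i$ for each $i\in\{3,4,5\}$.
\item Fix any Whitney map $\mu$ for $C(Z)$; such maps exist (see \cite{illanes}). By condition (1), for every sufficiently small $t\in(0,\mu(Z))$ the level $\mu^{-1}(t)$ has $\mathcal{Q}_5$.
\item By the implications $\mathcal{Q}_5\Rightarrow\mathcal{Q}_4\Rightarrow\mathcal{Q}_3$, that level also has $\mathcal{Q}_i$ for $i=3,4,5$, so it fails $\mathcal{P}_i$.
\end{enumerate}
Hence $Z\in\mathcal{W}_i$, and so $\mathcal{C}\subset\mathcal{W}_i$.

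One can go further and record that the levels failing $\mathcal{P}_i$ accumulate at $0$. This is the same inclusion used for Corollary \ref{57}: there the witnesses for $\mathcal{Q}_i$ failing SSWRP are continua with a Whitney map and levels $t_n\to0$ having $\mathcal{Q}_i$, while the continuum itself fails $\mathcal{Q}_i$, i.e.\ has $\mathcal{P}_i$. Both corollaries are therefore two readings of the single inclusion $\mathcal{C}\subset\mathcal{W}_i$.

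The main point to check is definitional rather than technical. We must fix the meaning of ``witnessing the failure of the Whitney property'' so that a single Whitney map and a single positive level suffice, and then confirm that the direction of the implication chains is the right one. Here $\mathcal{P}$ is inherited downward from $\mathcal{P}_3$ to $\mathcal{P}_5$, and $\mathcal{Q}$ is inherited upward from $\mathcal{Q}_5$ to $\mathcal{Q}_3$. This is exactly the configuration Theorem \ref{shore} was designed for, with the strongest negative property on the levels and the strongest positive property on $Z$. No further topology is needed.
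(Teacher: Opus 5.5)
Your proposal is correct and is exactly the deduction the paper intends; the paper states the corollary without a written proof, as an immediate consequence of Theorem~\ref{shore}. Every $Z$ in the class $\mathcal{C}$ of that theorem lies in the witnessing class via $\mathcal{P}_3\Rightarrow\mathcal{P}_4\Rightarrow\mathcal{P}_5$ on $Z$ and $\mathcal{Q}_5\Rightarrow\mathcal{Q}_4\Rightarrow\mathcal{Q}_3$ on its small positive levels, and a common model for a class would also be a common model for any subclass.
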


 Recall that $\mathcal{Q}_1$ and $\mathcal{Q}_2$ are Whitney-reversible (indeed, sequentially strong Whitney-reversible) properties (see \cite[Theorem 3.2]{refinablematsuhashi} and \cite[Corollary 2.4]{nonweak}), so the classes of continua witnessing the failure of the Whitney-reversible property (resp., strong Whitney-reversible property, sequential strong Whitney-reversible property) for $\mathcal{Q}_1$ and $\mathcal{Q}_2$ are empty. 
 On the other hand, for $\mathcal{Q}_6$ (having a cut point), it was already established in \cite[Theorem 4.1]{none} that the corresponding witnessing classes for this property  admit no common model. Thus, among $\mathcal{Q}_1, \ldots, \mathcal{Q}_6$, the question of whether the witnessing class for the Whitney-reversible property (resp., strong Whitney-reversible property) admits a common model remains open solely for $\mathcal{Q}_3, \mathcal{Q}_4$, and $\mathcal{Q}_5$. Hence, we naturally pose the following question.
\begin{problem}
\label{open_prob}
For each $i \in \{3, 4, 5\}$, let $\mathcal{C}_i$ (resp., $\mathcal{C}_i'$) be the class of continua witnessing the failure of the Whitney-reversible property (resp., strong Whitney-reversible property) for $\mathcal{Q}_i$. Does $\mathcal{C}_i$ (or $\mathcal{C}_i'$) admit a common model?
\label{problemcommon}
\end{problem}

  A continuum $X$ is called \emph{aposyndetic} if, for every two distinct points $x,y\in X$, there exists a subcontinuum $K$ of $X$ such that $x\in\operatorname{Int}_X K$ and $y\notin K$. 
It is known that being colocally connected and being aposyndetic are not Whitney-reversible properties; see \cite[Exercise 29.9]{illanes} and \cite[Example 3.3]{refinablematsuhashi}. Note that every colocally connected continuum is aposyndetic, and every point of a colocally connected continuum is a non-cut point. In \cite[Example 3.2]{davidm}, an example is given of a continuum for which every positive Whitney level is colocally connected, while the continuum itself is not aposyndetic and has a cut point. Although it is not explicitly stated in \cite[Example 3.2]{davidm} that the continuum has a cut point, this is readily seen from the construction of the example. We now strengthen the result as follows.%As can be seen from the proof of the following theorem,  Theorem \ref{cor:no-common-model} and \cite[Corollary 3.4]{david} readily imply that the class of such continua does not have a common model.

\begin{theorem}
Let $\mathcal{C}$ be the class of continua $Z$ satisfying the following properties:
\begin{enumerate}
\item for each Whitney map $\mu \colon C(Z) \to [0, \mu(Z)]$ and for each $t \in (0, \mu(Z))$, the Whitney level $\mu^{-1}(t)$ has property $\mathcal{P}_1$ (i.e., $\mu^{-1}(t)$ is colocally connected), and
\item $Z$ does not have property $\mathcal{P}_6$ (i.e., $Z$ has a cut point) and is not aposyndetic.
\end{enumerate}
Then $\mathcal{C}$ does not admit a common model.
\label{P}
\end{theorem}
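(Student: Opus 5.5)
We follow the scheme of the proof of Theorem~\ref{shore}: embed a family of meandering continua into $\mathcal{C}$, up to continuous surjections, and then apply Theorem~\ref{cor:no-common-model}. The natural building block is the continuum $D$ of \cite[Example 3.2]{davidm}. Every positive Whitney level of $D$ is colocally connected, $D$ is not aposyndetic, and $D$ has a cut point. We realize $D$ inside $I^\infty\times I$ so that $D\cap(I^\infty\times\{0\})=A^0$ and $D\cap(I^\infty\times\{1\})=A^1$ are single points. We may choose the embedding so that these two points are chosen appropriately, for instance endpoints of an arc portion of $D$ that is attached through the cut point. For each meandering pattern $\mathbf m$ we then form the meandering continuum $M^{\mathbf m}_{D,A^0,A^1}\in\mathbb M_{D,A^0,A^1}$. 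By Theorem~\ref{cor:no-common-model}, this family has no common model. Hence it suffices to show that each $M^{\mathbf m}_{D,A^0,A^1}$ is in $\mathcal{C}$, or at least is a continuous image of some member of $\mathcal C$.

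Condition~(2) should be the easy part. The limit set $Y_D\times\{0\}$ is a proper subcontinuum, and the meandering copies $D_n$ accumulate on all of it. The point $(x,0,0)$ of the triple junction of $Y_D$, or alternatively the image of the cut point of a copy $D_n$, will be a cut point of $M^{\mathbf m}$. Non-aposyndesis will be inherited as follows. Take the two points $x,y$ in $D_0$ witnessing non-aposyndesis of $D$. Any subcontinuum $K$ containing $x$ in its interior would contain $y$, because $D_0$ meets the rest of the space only in $A^0_0\cup A^1_0$, away from $x$ and $y$. Alternatively, we can use the points of $Y_D\times\{0\}$: every subcontinuum containing such a point in its interior must contain a tail of the meander, and hence all of $Y_D\times\{0\}$.

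The main obstacle is condition~(1): every positive Whitney level $\mu^{-1}(t)$ of $M^{\mathbf m}$ must be colocally connected, for \emph{every} Whitney map $\mu$ and \emph{every} $t>0$. Our approach is to localize. For $E\in\mu^{-1}(t)$ and a neighbourhood $\mathcal U$ of $E$, we need a smaller neighbourhood $\mathcal V$ whose complement in $\mu^{-1}(t)$ is connected. We split into cases. If $E$ lies in the interior of a finite union of copies $D_n$, we use the colocal connectedness of the Whitney levels of $D$ from \cite[Example 3.2]{davidm}, transferred by restricting $\mu$. To connect the rest of the level, we use the fact that the level is arcwise connected through order arcs, in the sense of \cite[Exercise 4.33]{nadler1}. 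If $E$ meets $Y_D\times\{0\}$, we expect that elements near $E$ can be slid along the meander, so that removing a small neighbourhood does not disconnect the level.

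If this verification proves too delicate for $D$ itself, the fallback is to thicken, as in Theorem~\ref{shore}. We would replace the meandering part by a band-like or product-type version that makes small Whitney levels manifestly colocally connected, say a product with $I$ as in \cite{davidm}. We would then note that collapsing this thickening gives a continuous surjection onto $M^{\mathbf m}_{D,A^0,A^1}$, so Theorem~\ref{cor:no-common-model} still excludes a common model.
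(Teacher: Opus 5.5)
\textbf{Verdict: there is a genuine gap.} Your overall scheme matches the paper's: build a meandering family, reduce to continuous images, and apply Theorem~\ref{cor:no-common-model}. Condition~(2) is essentially fine. One slip there: the triple junction of $Y_D\times\{0\}$ is \emph{not} a cut point of $M^{\mathbf m}_{D,A^0,A^1}$. Removing it leaves a set squeezed between the connected dense chain $\bigcup_n D_n$ and its closure, so the cut points must come from the junction points $D_n\cap D_{n+1}$ (or cut points of the copies). The gap is condition~(1), and your sketch for it does not work.
\begin{itemize}
\item Restricting $\mu$ to $C(D_n)$, or to a finite union of copies, does not transfer anything. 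Colocal connectedness of a sublevel at $E$ says nothing about whether the complement of a neighbourhood of $E$ in the \emph{whole} level $\mu^{-1}(t)$ is connected.
\item A finite union of copies glued at junction points is not $D$, so \cite{davidm} says nothing about its Whitney levels.
\item The case where $E$ meets $Y_D\times\{0\}$ is only an expectation.
\item You must cover every $t\in(0,\mu(Z))$, where elements run through many copies and into the limit set; your case split does not reach these.
\item The fallback is also doubtful. A product-type thickening tends to destroy the cut points required in~(2), and in Theorem~\ref{shore} only the boundary of the band was used, for a different purpose.
\end{itemize}

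The missing idea is the criterion \cite[Corollary 3.4]{david}. The paper uses it to conclude that $A\in\mu^{-1}(t)$ is a point of colocal connectedness of $\mu^{-1}(t)$ as soon as $A$ contains points of colocal connectedness of the base continuum. This turns~(1) into a statement about the base continuum alone, uniformly in $t>0$.

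To make that statement trivial, the paper does not use the example of \cite{davidm}. It takes $X$ to be a simple closed curve, with $A^0$ and $A^1$ single points. Then $M^{\mathbf m}_{X,A^0,A^1}$ is a chain of circles accumulating on a bouquet of three circles, and:
\begin{itemize}
\item the junction points are cut points;
\item points of $Y_X\times\{0\}$ witness non-aposyndesis;
\item every point outside the closure $J$ of the junction points, which is a countable set, is a point of colocal connectedness.
\end{itemize}
Hence every nondegenerate $A$ meets the complement of $J$, and~(1) follows at once. With your building block $D$, the Whitney levels of $D$ itself give no control over where the non-colocally-connected points of $M^{\mathbf m}_{D,A^0,A^1}$ lie. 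So even with the corollary you would still need an additional argument about $D$.
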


\begin{proof}
Let $X$ be a simple closed curve. We may assume that $X \subset I^\infty \times I$ and that $|X \cap (I^\infty \times \{0\})|=|X \cap (I^\infty \times \{1\})|=1$. Let $A^0=X\cap(I^\infty\times\{0\})$ and $A^1=X\cap(I^\infty\times\{1\})$. Then it is easy to see that, for each meandering pattern $\mathbf{m}$, $M_{X,A^0,A^1}^{\mathbf m}$ has a cut point and is not aposyndetic. 

We show that, for each $t\in(0,\mu(M_{X,A^0,A^1}^{\mathbf m}))$, $\mu^{-1}(t)$ is colocally connected. Let $A\in\mu^{-1}(t)$. Let $J$ denote the closure of the set of all junction points of the simple closed curves in $M_{X,A^0,A^1}^{\mathbf m}$. It is easy to see that $A \setminus J\neq\emptyset$ and that each point of $A \setminus J$ is a point of colocal connectedness of $M_{X,A^0,A^1}^{\mathbf m}$. Hence, by \cite[Corollary 3.4]{david}, $A$ is a point of colocal connectedness of $\mu^{-1}(t)$. Therefore, $\mu^{-1}(t)$ is colocally connected.  Together with Theorem \ref{cor:no-common-model}, we see that the family $\{M_{X,A^0,A^1}^{\mathbf m}:\mathbf{m}\text{ is a meandering pattern}\}$ admits no common model. Therefore, neither does $\mathcal{C}$.
\end{proof}

\begin{remark}
Regarding the properties $\mathcal{P}_1, \ldots, \mathcal{P}_6$, the previous theorem implies that for each $i \in \{1, \ldots, 6\}$, the class of continua witnessing the failure of the Whitney-reversible property for $\mathcal{P}_i$ already admits no common model. Thus, the corresponding question for $\mathcal{P}_i$ is completely resolved.
\end{remark}

Finally, we summarize in the following  table the main results regarding the common model problem for the properties $\mathcal{P}_i$ and $\mathcal{Q}_i$ ($i = 1, \ldots, 6$). Here, ``not WP (CM)" (resp., ``not WRP (CM)", ``not SWRP (CM)", ``not SSWRP (CM)") denotes whether the class of continua witnessing the failure of the Whitney property (resp., Whitney-reversible property, strong Whitney-reversible property, sequential strong Whitney-reversible property) admits a common model. The entry `` $\emptyset$ " indicates that the corresponding property is indeed a Whitney or Whitney-reversible property, so the class witnessing its failure is empty. The entry ``no" means that the corresponding witnessing class admits no common model, whereas ``open" signifies that the common model problem remains unresolved (see Problem~\ref{problemcommon}). The results established in this paper are indicated in red.

\medskip

{\scriptsize 
\begin{tabular}{c|c|c|c|c}
\hline
  & not WP (CM) & not WRP (CM) & not SWRP (CM) & not SSWRP (CM) \\
\hline
$\mathcal{P}_1$ & $\emptyset$  (\cite[Thm. 3.2]{refinablematsuhashi})& \Rno{Thm. \ref{P}} & \Rno{Thm. \ref{P}} & \Rno{Thm. \ref{P}} \\
$\mathcal{Q}_1$ & \Rno{Thm. \ref{P}} & $\emptyset$  (\cite[Thm. 3.2]{refinablematsuhashi}) & $\emptyset$  (\cite[Thm. 3.2]{refinablematsuhashi}) & $\emptyset$  (\cite[Thm. 3.2]{refinablematsuhashi}) \\
\hline
$\mathcal{P}_2$ &  $\emptyset$   (\cite[Thm. 2.3]{nonweak})  & \Rno{Thm. \ref{P}} & \Rno{Thm. \ref{P}} & \Rno{Thm. \ref{P}} \\
$\mathcal{Q}_2$ & \Rno{Thm. \ref{P}} &  $\emptyset$   (\cite[Cor. 2.4]{nonweak})  &  $\emptyset$  (\cite[Cor. 2.4]{nonweak})  &  $\emptyset$  (\cite[Cor. 2.4]{nonweak}) \\
\hline
$\mathcal{P}_3$ & \Rno{Cor. \ref{58}} &  \Rno{Thm. \ref{P}} &  \Rno{Thm. \ref{P}} &  \Rno{Thm. \ref{P}}\\
$\mathcal{Q}_3$ & \Rno{Thm. \ref{P}}& open & open & \Rno{Cor. \ref{57}} \\
\hline
$\mathcal{P}_4$ & \Rno{Cor. \ref{58}} &  \Rno{Thm. \ref{P}} &  \Rno{Thm. \ref{P}} &  \Rno{Thm. \ref{P}} \\
$\mathcal{Q}_4$ &\Rno{Thm. \ref{P}} & open & open & \Rno{Cor. \ref{57}} \\
\hline
$\mathcal{P}_5$ & \Rno{Cor. \ref{58}} &  \Rno{Thm. \ref{P}} &  \Rno{Thm. \ref{P}} &  \Rno{Thm. \ref{P}}\\
$\mathcal{Q}_5$ & \Rno{Thm. \ref{P}}& open & open & \Rno{Cor. \ref{57}} \\
\hline
$\mathcal{P}_6$ & no (\cite[Thm. 4.1]{none}) & \Rno{Thm. \ref{P}}&  \Rno{Thm. \ref{P}} &  \Rno{Thm. \ref{P}}\\
$\mathcal{Q}_6$ & \Rno{Thm. \ref{P}} &  no (\cite[Thm. 4.1]{none}) &  no (\cite[Thm. 4.1]{none}) &no (\cite[Thm. 4.1]{none}) \\
\hline

\label{summary}
\end{tabular}
}

\bibliographystyle{amsplain}
\bibliography{refs}

%\bibitem{anderson} R. D. Anderson, {\it Atomic decompositions of continua}, Duke Math. J., {\bf 23} (1956), 507-514.

\end{document}